\newcommand{\abs}[1]{\vert#1\vert}
\newcommand{\babs}[1]{\big\vert#1\big\vert}
\newcommand{\Abs}[1]{\Vert#1\Vert}
\newcommand{\bAbs}[1]{\big\Vert#1\big\Vert}
\newcommand{\grad}{\nabla_{X,z}}
\newcommand{\dsp}{\displaystyle}
\newcommand{\dt}{\partial_t}
\newcommand{\dz}{\partial_z}
\newcommand{\R}{{\mathbb R}}
\newcommand{\cS}{{\mathcal S}}
\newcommand{\eps}{\varepsilon}
\newcommand{\rhob}{{\underline{\rho}}}
\newcommand{\N}{{\mathbb N}}
\newcommand{\bff}{{\bf f}}
\newcommand{\rhoeq}{\rho_{\rm eq}}
\begin{document}

\title{Normal mode decomposition and dispersive and nonlinear mixing in stratified fluids%\thanks{Grants or other notes
%about the article that should go on the front page should be
%placed here. General acknowledgments should be placed at the end of the article.}
}
%\subtitle{Internal waves in stratified fluids}

\titlerunning{Internal waves in stratified fluids}        % if too long for running head

\author{Beno\^{i}t Desjardins        \and
        David Lannes \and
        Jean-Claude Saut %etc.
}

%\authorrunning{Short form of author list} % if too long for running head

\institute{B. Desjardins \at
              CMLA ENS Cachan, 61 Avenue du Président Wilson, 94230 Cachan, France \\
              %Tel.: +123-45-678910\\
              %Fax: +123-45-678910\\
              %\email{fauthor@example.com}           %  \\
%             \emph{Present address:} of F. Author  %  if needed
           \and
           D. Lannes \at
              Universit\' e de Bordeaux, IMB et CNRS UMR 5251,  33405 Talence , France
\email{david.lannes@math.u-bordeaux.fr}  
\and
J.-C.Saut \at
Laboratoire de Math\' ematiques, UMR 8628,
Universit\' e Paris-Sud et CNRS, Bat. 307, 91405 Orsay, France
}

\date{Received: date / Accepted: date}
% The correct dates will be entered by the editor

\maketitle

{\quote \it To the memory of our friend Walter Craig}

\bigbreak

\begin{abstract}
Motivated by the analysis of the propagation of internal waves in a stratified ocean, we consider in this article the incompressible Euler equations with variable density in a flat strip, and we study the evolution of perturbations of  the hydrostatic equilibrium corresponding to a stable vertical stratification of the density. We show the local well-posedness of the equations in this configuration and provide a detailed study of their linear approximation. Performing a modal decomposition according to a Sturm-Liouville problem associated to the background stratification, we show that the linear approximation can be described by a series of dispersive perturbations of linear wave equations. When the so called Brunt-Vais\"al\"a frequency is not constant, we show that these equations are coupled, hereby exhibiting a phenomenon of dispersive mixing. We then consider more specifically shallow water configurations (when the horizontal scale is much larger than the depth); under the Boussinesq approximation (i.e. neglecting the density variations in the momentum equation), we provide a well-posedness theorem for which we are able to control the existence time in terms of the relevant physical scales. We can then extend the modal decomposition to the nonlinear case and exhibit a nonlinear mixing of different nature than the dispersive mixing mentioned above. Finally, we discuss some perspectives such as the sharp stratification limit that is expected to converge towards two-fluids systems.
\keywords{Internal waves \and Modal decomposition \and Dispersive mixing \and Sharp stratification limit}
% \PACS{PACS code1 \and PACS code2 \and more}
% \subclass{MSC code1 \and MSC code2 \and more}
\end{abstract}

\section{Introduction}

\subsection{General setting}
The aim of this paper is to analyze the propagation of internal waves in a continuously stratified fluid. 
%More precisely those waves may appear when the density of the (incompressible) fluid varies only within a layer whose thickness $h$ is much smaller than  the total depth and $h$ must be considered as the fundamental scale against which wave amplitude and length are to be measured. 
In oceans, this variation of density  (pycnocline), may be due to a difference of salinity (halocline) or temperature (thermocline); note also that such waves also appear in applications to atmospheric studies (e.g. \cite{Flierl,Feliks,Klein}).

The modeling of such waves has a long history, starting with the pioneering mathematical  study of periodic waves by  Dubreil-Jacotin \cite{DJ}. We refer to  \cite{Ben2,BC,DA,Long,Long2,Yih,Yih2} for theoretical and experimental studies of solitary waves and to \cite{HM} for   a survey on oceanic internal waves. %On the other hand we are not aware of rigorous derivations of asymptotic models of internal waves in a continuously stratified fluid.

While many classical equations such that the Benjamin-Ono equation \cite{Ben,Ono} (see also \cite{DA})  or the Intermediate Long Wave equation \cite{KKD} have been formally derived in this context, no rigorous derivation seems to be available. This is in contrast with the two-layers formulation where the internal waves propagate at the interface of two layers of (incompressible) fluids of different densities. In this setting one can generalize the classical approach for surface waves (see {\it eg} \cite{Lannes_book}) and derive rigorously (in the sense of consistency), in various regimes, a plethora of asymptotic models including all the classical models of internal waves. Roughly speaking this is achieved by expanding with respect to suitable small parameters two non-local operators that appear when expressing the (free boundary), two-layers system as an equation on a fixed domain. Together with the delicate analysis of the Cauchy problem for the full two-layers system (see \cite{Lannes2}), which involves Kelvin-Helmholtz type instabilities (see also \cite{BarrosChoi,BarrosChoi2,LannesMing} for the persistence of these instabilities in shallow water asymptotic models),  this leads to the complete justification of some internal waves asymptotic systems.

The situation is quite different for a continuously stratified fluid. There is no more free boundary and one starts from the non-homogeneous Euler system for which the local well-posedness of the Cauchy problem is well known (see {\it eg} \cite{Danchin} and the references therein) although it does not seem to have been considered in the present setting. Another difficulty, addressed here, is to establish a time of existence for the solutions which is relevant with respect to the different physical scales involved.

From a more qualitative viewpoint, it is common in oceanography or atmospheric studies to decompose the various quantities of interest on a well chosen basis of vertical modes related to the background stratification (e.g. \cite{Flierl,Gill}). This approach has not been fully justified so far; we provide such a rigorous justification here, exhibiting additional conditions that need to be satisfied if one wants this decomposition to converge properly. Moreover, in most of the studies, the non-hydrostatic component of the pressure is neglected; we show here how to take it into account and that, at the linear level, its contribution is of dispersive nature. We also show that in some cases (when the so-called Brunt-Vais\"al\"a frequency is not constant), this dispersive term induces some mixing between the different modes of the decomposition. In the case of a constant Brunt-Vais\"al\"a frequency where such mixing does not occur at leading (linear) order, we show that such mixing occur in shallow water at the next order due to  nonlinearities, and we derive a sequence of coupled Boussinesq-like systems.

\bigbreak

We now make more  precise the physical context of our study.

\vspace{0.3cm}
We assume that the fluid domain is infinite in the horizontal direction $X\in \R^d$ ($d=1,2$) 
delimited by a flat bottom located at $z=-H$ and a rigid lid at $z=0$. The velocity field at time $t$ and at the point $(X,z)$ of the fluid domain is denoted by $U(t,X,z)\in \R^{d+1}$ and its horizontal and vertical components are respectively denoted by $V(t,X,z)\in \R^d$, $w(t,X,z)\in \R$. We also denote by $P(t,X,z)\in \R$ the pressure field and by ${\bf g}=-g{\bf e}_z$ the (constant) acceleration of gravity. The Euler equations governing the fluid motion are therefore
\begin{equation}
\label{Euler}
\left\lbrace
\begin{array}{l}
\dsp \rho(\dt U+U\cdot\grad U)=-\grad P+\rho {\bf g},\\
\dsp \dt \rho+U\cdot\grad \rho=0,\\
\dsp \nabla_{X,z}\cdot U=0
\end{array}\right.
\qquad (t\geq 0, X\in \R^d, z\in (-H,0)),
\end{equation}
with the boundary conditions
\begin{equation}\label{BC}
w_{\vert_{z=-H}}=w_{\vert_{z=0}}=0
\end{equation}
expressing the impermeability of the rigid bottom and lid.

These equations possess equilibrium solutions $(U_{\rm eq},\rho_{\rm eq},P_{\rm eq})$ depending only on the vertical variable $z$ of the form
\begin{equation}\label{equil}
U_{\rm eq}=0, \quad \rho_{\rm eq}=\rho_{\rm eq}(z),\quad \frac{d}{dz} P_{\rm eq}=-\rho_{\rm eq}g.
\end{equation}
Perturbation of such equilibrium solutions give rise in the oceans to the propagation of waves called ``internal waves''. These internal waves are therefore exact solutions $(U_{ex},\rho_{ex},P_{ex})$ to 
 (\ref{Euler})-(\ref{BC}) of the form
$$
U_{\rm eq}=\eps U, \quad \rho_{ex}=\rho_{\rm eq}+\eps \rho ,\quad P_{ex}=P_{\rm eq}+\eps P,
$$
where $\eps>0$ is a parameter measuring the amplitude of the perturbation, and
with $(U,\rho,P)$ solving
\begin{equation}
\label{Euler_pert}
\left\lbrace
\begin{array}{l}
\dsp (\dt U+\eps U\cdot\grad U)=-\frac{1}{\rho_{\rm eq}+\eps \rho}\grad P-\frac{\rho}{\rho_{\rm eq}+\eps \rho} g{\bf e_z},\\
\dsp \dt \rho+\eps U\cdot\grad \rho+w\frac{d}{dz}\rho_{\rm eq}=0,\\
\dsp \nabla_{X,z}\cdot U=0
\end{array}\right.
\end{equation}
with the boundary conditions
\begin{equation}\label{BC_pert}
w_{\vert_{z=-H}}=w_{\vert_{z=0}}=0.
\end{equation}

\vspace{0.3cm}
The paper is organized as follows. In Section \ref{sectWP} we establish the local well-posedness for the Euler system in the configuration \eqref{Euler_pert}-\eqref{BC_pert}  considered here.
We then consider in Section \ref{sectlin}  the linear approximation to the full system of equations \eqref{Euler_pert}-\eqref{BC_pert}.  This approximation is justified in \S \ref{secterrlin} and the normal mode decomposition of the solutions to this linear system  is performed in  \S \ref{SectSL}. When the so-called Brunt-Vais\"al\"a frequency is constant, we show in \S \ref{sectNconst} that the evolution of the coefficients of this decomposition is governed by a sequence of uncoupled dispersive perturbations of waves equations. When the Brunt-Vais\"al\"a frequency is not constant, we exhibit in \S \ref{sectNpasconst} the mechanism of dispersive mixing. Particular attention is also paid to the derivation of additional conditions ensuring a proper convergence of the modal decomposition. 

Finally, in Section \ref{sectSW} we consider the case of shallow water configurations, when the horizontal scale of the perturbations is much larger than the depth of the ocean.  Under the additional {\it strong Boussinesq assumption} under which the density is assumed to be constant in Euler's equations, we are able to derive nonlinear models. The first step is to derive in  \S \ref{sectunif} a local existence theorem for the non-dimensionalized system that ensures that the existence time is relevant with respect to the different physical scales of the problem. We then extend in \S \ref{sectmodeNL} the modal representation introduced in Section \ref{sectlin} in order to take into account the nonlinear effect. It is shown that they induce another kind of mixing between the modes. Finally, some perspectives are considered in Section \ref{sectperspectives}, such as the sharp stratification limit towards two-fluids models.

\medbreak

\subsection{Notations}

- $X=(x,y)\in \R^2$ denotes the horizontal variables. We also denote  by $z$ the vertical variable.\\
- $\nabla$ is the gradient with respect to the horizontal variables;
$\nabla_{X,z}$ is the full three dimensional gradient operator. \\
- We denote by $d=1,2$ the horizontal dimension. When $d=1$, we often
identify functions on $\R$ as functions on $\R^2$ independent of the
$y$ variable. In particular, when $d=1$, the gradient operator takes the form
$$
\nabla_{X,z}f=\left(\begin{array}{c}\partial_x f \\ 0 \\ \partial_z
    f \end{array}\right).
$$
- $\cS$ is the flat strip $\R^d\times (-H,0)$ (or $\R^d\times (-1,0)$ when working with dimensionless variables in Section \ref{sectSW}).\\
- We write $U$ the velocity field; its horizontal component
 is written ${ V}$, and its vertical component
${w}$.\\
- We always use simple bars to denote functional norms on $\R^d$ and
double bars to denote functional norms on the $d+1$ dimensional
domain $\cS$; for instance
$$
\abs{f}_p=\abs{f}_{L^p(\R^d)},\quad
\abs{f}_{H^s}=\abs{f}_{H^s(\R^d)},\quad
\Abs{f}_p=\Abs{f}_{L^p(\cS)}, \mbox{ etc.}
$$
- For $f,g\in L^2(\cS)$, we denote by $(f,g)$ the standard $L^2(\cS)$ scalar product.\\
- We use the Fourier multiplier notation
$$
f(D)u={\mathcal F}^{-1}(\xi\mapsto f(\xi)\widehat{u}(\xi))
$$
and denote by $\Lambda=(1-\Delta)^{1/2}=(1+\abs{D}^2)^{1/2}$ the
fractional derivative operator.\\
-We define, for
all $s\in \R$, $k\in \N$ the space $H^{s,k}=H^{s,k}(\cS)$ by
\begin{equation}\label{defHsk}
H^{s,k}=\bigcap_{j=0}^k H^j((-H,0);H^{s-j}(\R^d)),\quad \mbox{ with }\quad
\Abs{u}_{H^{s,k}}=\sum_{j=0}^k \Abs{\Lambda^{s-j}\dz^j u}_2.
\end{equation}
- If $\omega$ is a positive scalar function, we denote by $L^2_\omega$ the weighted $L^2$-space on $\cS$ with associated norm
$$
\Abs{f}_{L^2_\omega}^2 =\int_{\cS} \vert f(X,z) \vert^2 \omega(X,z) {\rm d}X{\rm d}z.
$$
- We generically denote by $C(\cdot)$ some positive function that has
a nondecreasing dependence on its arguments.\\

\section{Local well-posedness}\label{sectWP}

The Cauchy problem for the non-homogeneous Euler equations has been
considered in various settings: whole space or bounded and unbounded
domains, $L^2$ or $L^p$ based spaces, etc (see for instance
\cite{Itoh,ItohTani,Danchin}). It seems however that the Cauchy
problem for the configuration considered here (unbounded domain with a
density whose gradient is not in $L^2(\cS)$) is not included in
existing results. We therefore provide
below a local well-posedness result. Contrary to the above references,
we do not seek in this result to
be sharp in the regularity requirements for the initial conditions;
our main concern is rather to  distinguish as much as possible the
vertical and horizontal derivatives in the proof, since they play drastically different roles in the qualitative descriptions of the solutions addressed in this paper.
\begin{theorem}\label{theorem1}
Let $0<\eps\leq 1$, $\nu > d +2$, $\rho_{\rm eq}\in W^{\nu,\infty}(-H,0)$, and $\rho^0, U^0\in H^{\nu}(\cS)$ be such that
$\grad\cdot U^0=0$ and 
$$
\begin{cases}
\exists \rho_{\rm min}>0,\qquad \inf_{z\in [-H,0]} \rho_{\rm eq}(z)\geq \rho_{\rm
  min}\quad\mbox{ and }\quad \inf_{\cS}(\rho_{\rm eq}+\eps \rho^0)\geq
\rho_{\rm min},\\
\exists M_0>0, \quad \forall z\in (-H,0), \qquad \frac{1}{M_0}\leq -\rho'_{\rm eq}(z)\leq M_0.
\end{cases}
$$
Then there exists $T>0$ such that for all $\eps\in (0,1]$, there is a unique solution $(U,\rho)\in
C([0,T];H^{\nu}(\cS)^{d+2})$ to
\eqref{Euler_pert}-\eqref{BC_pert} with initial condition
$(U^0,\rho^0)$; moreover,
$$
\frac{1}{T}=c_1,\qquad \sup_{t\in [0,T]}
\big(\Abs{U}_{H^\nu}+\Abs{\rho}_{H^\nu}\big)\leq c_2,
$$
with $c_j=C\big(\frac{1}{\rho_{\min}},M_0,\Abs{\rho_{\rm
    eq}}_{W^{\nu,\infty}},H,\Abs{\rho^0}_{H^{\nu}},\Abs{U^0}_{H^\nu}\big)$,
$j=1,2$. 
\end{theorem}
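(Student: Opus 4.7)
The plan is to construct the solution via a Picard-type iteration based on linearization, with energy estimates in $H^{\nu}(\cS)$ that carefully separate the roles of horizontal and vertical derivatives, as suggested by the authors. Starting from $(U^0,\rho^0)$, I would define $(U^{n+1},\rho^{n+1})$ by freezing the nonlinear coefficients at $(U^n,\rho^n)$ in \eqref{Euler_pert}, recovering the pressure from an auxiliary elliptic problem, and then advancing the density by a linear transport equation with source $w^{n+1}\rho_{\rm eq}'$.

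For each iterate, the pressure $P^{n+1}$ solves an elliptic equation of the form
$$
\grad\cdot\Bigl(\frac{1}{\rho_{\rm eq}+\eps\rho^n}\grad P^{n+1}\Bigr)=F^n,
$$
with Neumann-type boundary conditions at $z=0,-H$ obtained by evaluating the vertical component of the momentum equation on the rigid lid and bottom using $w_{|_{z=0}}=w_{|_{z=-H}}=0$. The uniform lower bound $\rho_{\rm eq}+\eps\rho^n\geq\rho_{\rm min}/2$, preserved on a short time interval thanks to $H^\nu\hookrightarrow L^\infty$ (ensured by $\nu>d+2$), guarantees coercivity; elliptic regularity then yields $\grad P^{n+1}\in H^{\nu}$ with norm controlled by $\Abs{U^n}_{H^\nu}$ and $\Abs{\rho^n}_{H^\nu}$. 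This choice of pressure propagates the divergence-free constraint and preserves the boundary conditions on $w^{n+1}$.

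For the energy estimates I would apply $\partial^\alpha$ with $\abs{\alpha}\leq\nu$ to both equations and pair the results against $(\rho_{\rm eq}+\eps\rho^n)\partial^\alpha U^{n+1}$ and a weighted version of $\partial^\alpha \rho^{n+1}$, so that the most singular contribution of $\grad P^{n+1}$ cancels upon integration by parts. Horizontal derivatives $\nabla^\alpha$ commute with the boundary conditions and with the multiplication by $\rho_{\rm eq}'(z)$, so they are handled by standard Moser and Kato--Ponce commutator estimates; vertical derivatives require using $\grad\cdot U=0$ to trade $\dz w$ against horizontal derivatives of $V$, together with the assumption $\rho_{\rm eq}\in W^{\nu,\infty}$ to absorb the source $w\rho_{\rm eq}'$ and the lower bound $-\rho_{\rm eq}'\geq 1/M_0$ to retain good signs in the density equation. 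Combining these yields a Gronwall-type inequality
$$
\frac{d}{dt}\bigl(\Abs{U^{n+1}}_{H^\nu}^2+\Abs{\rho^{n+1}}_{H^\nu}^2\bigr)\leq C\bigl(\Abs{U^n}_{H^\nu},\Abs{\rho^n}_{H^\nu}\bigr)\bigl(1+\Abs{U^{n+1}}_{H^\nu}^2+\Abs{\rho^{n+1}}_{H^\nu}^2\bigr)
$$
from which one extracts a time $T>0$ depending only on the quantities listed in the statement and a uniform bound on the iterates.

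The main obstacle I anticipate is extracting uniform-in-$\eps$ elliptic estimates for $P^{n+1}$ whose coefficient $1/(\rho_{\rm eq}+\eps\rho^n)$ depends on the unknown and is merely $W^{\nu,\infty}$ (since $\rho_{\rm eq}'$ is only bounded, not in $L^2(\cS)$): one must carefully count derivatives falling on the coefficient versus on $P$, and distinguish vertical from horizontal contributions to avoid spurious losses. Once the uniform $H^\nu$ bound is established, contraction of the scheme in the lower-regularity norm $L^2(\cS)$ (or $H^{\nu-1}(\cS)$) follows from an analogous but simpler estimate on differences, and the limit lies in $C([0,T];H^{\nu}(\cS))$ by a standard weak--strong compactness argument. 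Uniqueness reduces to an $L^2$ energy inequality for the difference of two solutions, using the same pairings and the $L^\infty$ control of the first-order derivatives provided by $\nu>d+2$.
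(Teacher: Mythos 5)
Your proposal follows essentially the same route as the paper: the crux in both is the $H^\nu$ elliptic estimate for $\grad P$ obtained by carefully separating horizontal and vertical derivatives (the paper's Lemma~\ref{lemmpress}, which handles the fact that $\rho_{\rm eq}'\notin L^2(\cS)$ by writing $\frac{1}{\rho_{\rm eq}+\eps\rho}=\frac{1}{\rho_{\rm eq}}-\frac{\eps\rho}{\rho_{\rm eq}(\rho_{\rm eq}+\eps\rho)}$ and using $[\Lambda^{r-1}\abs{D},\frac{1}{\rho_{\rm eq}}]=0$, then bootstrapping in the number of $\dz$-derivatives via the equation for $\dz^2P$), followed by $H^\nu$ energy estimates with commutator bounds and a Gronwall argument, the construction of the solution from these a priori bounds being left to classical means. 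The only small divergence is that, with the full $H^\nu$ control of $\grad P$ in hand, the paper simply treats the pressure gradient as a bounded source term rather than cancelling it through a weighted pairing; this is just as well, since the integration by parts you propose produces non-vanishing boundary terms once vertical derivatives $\dz^k$ are applied (only $w$ itself, not $\dz^k w$, vanishes at $z=-H,0$), so the direct bound from the elliptic lemma is what actually closes the estimate.
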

\begin{remark}
The existence time provided by the theorem is independent of
$\eps$. Note however that without the linear terms, one would obtain
an existence time of order $O(1/\eps)$ instead of $O(1)$. The issue of large time existence (as well as shallow water stability) is addressed in Theorem \ref{theorem3} below.
\end{remark}
\begin{proof}
We just derive here a priori estimates on solutions to
\eqref{Euler_pert}-\eqref{BC_pert}; the construction of solutions from
these energy estimates being obtained with classical means. As for the
standard Euler equation, the key point is to control the pressure
term which, owing to the fact that $U$ is divergence free, is given by
the resolution of the following boundary value problem,
\begin{equation}\label{eqpression}
\left\lbrace
\begin{array}{l}
\dsp -\grad\cdot \frac{1}{\rho_{\rm eq}+\eps \rho}\grad P=\eps \grad \cdot
\big(U\cdot \grad U\big) 
+\dz \Big(\frac{\rho g}{\rho_{\rm eq}+\eps \rho}\Big),\\
\dsp -\frac{1}{\rho_{\rm eq}+\eps \rho}\dz P_{\vert_{z=-H,0}}=\Big(\frac{\rho g}{\rho_{\rm eq}+\eps \rho}\Big)_{\vert_{z=-H,0}}.
\end{array}\right.
\end{equation}
Existence of solutions to \eqref{eqpression} follows classically from
Lax-Milgram's theorem. We provide below the $H^\nu$ estimates on $\grad
P$ that we shall need to establish the a priori estimates on \eqref{Euler_pert}-\eqref{BC_pert}.
\begin{lemma}\label{lemmpress}
Let $\nu > d+2$, and $\rho, U\in H^{\nu}(\cS)$. If moreover
$\grad\cdot U=0$ and if
$$
\exists \rho_{\rm min}>0,\qquad \inf_{z\in (-H,0)} \rho_{\rm eq}(z)\geq \rho_{\rm
  min}\quad\mbox{ and }\quad \inf_{\cS}(\rho_{\rm eq}+\eps \rho)\geq
\rho_{\rm min},
$$
then the solution to \eqref{eqpression} satisfies the estimate
$$
\Abs{\grad P}_{H^{\nu}}\leq C(\frac{1}{\rho_{\min}},\Abs{\rho_{\rm
    eq}}_{W^{\nu,\infty}},\Abs{\rho}_{H^{\nu}})\big(\Abs{\rho}_{H^{\nu}}+\eps\Abs{U}_{H^{\nu}}^2\big).
$$
\end{lemma}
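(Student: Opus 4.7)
The plan is to treat \eqref{eqpression} as an elliptic Neumann problem with variable coefficient $a=1/(\rho_{\rm eq}+\eps \rho)$, obtaining first an $H^1$ estimate by energy methods and then higher regularity by separately controlling horizontal and vertical derivatives, the two being recombined via the equation itself. Testing the PDE against a smooth $\varphi$ and integrating by parts on both sides, the boundary contributions produced by $-\grad\cdot(a\grad P)$ cancel those generated by $\partial_z\big(\rho g/(\rho_{\rm eq}+\eps\rho)\big)$ together with the Neumann condition, leaving the clean weak formulation
$$
\int_\cS \frac{1}{\rho_{\rm eq}+\eps\rho}\,\grad P\cdot \grad\varphi
\;=\;-\eps\int_\cS (U\cdot\grad U)\cdot\grad\varphi
\;-\;\int_\cS \frac{\rho g}{\rho_{\rm eq}+\eps\rho}\,\dz\varphi.
$$
Since $a\geq 1/(\rho_{\min}+\eps\Abs{\rho}_\infty)$ is uniformly positive, Lax--Milgram (on $\dot H^1$ modulo constants) yields existence and the baseline bound $\Abs{\grad P}_2\leq C(1/\rho_{\min})\big(\Abs{\rho}_2+\eps\Abs{U}_\infty\Abs{\grad U}_2\big)$.

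Next I would upgrade to $H^\nu$ regularity anisotropically. Horizontal derivatives $\partial_X^\alpha$ are tangential and commute with the Neumann operator at $z=-H,0$, so applying $\Lambda^s$ (or a power of $\nabla$) to the equation, commuting with the coefficient $a$ and re-running the energy estimate controls $\Abs{\Lambda^{s}\grad P}_2$ by the horizontal Sobolev norms of the right-hand side, up to tame remainders coming from the commutators $[\Lambda^s,a]$. The vertical regularity is then extracted from the equation itself: one solves
$$
a\,\dz^2 P \;=\;-\Delta P \cdot a\;-\;(\dz a)\dz P \;-\;\eps\grad\cdot(U\cdot\grad U)\;-\;\dz\Big(\frac{\rho g}{\rho_{\rm eq}+\eps\rho}\Big),
$$
which trades each additional $\dz^2$ for a horizontal Laplacian plus lower-order terms. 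Iterating this identity (together with the tangential estimate at every step) produces the full $H^{\nu}$ bound on $\grad P$ in terms of $\Abs{\rho}_{H^\nu}$ and $\eps\Abs{U}_{H^\nu}^2$, the latter factor arising because $H^{\nu-1}(\cS)$ is an algebra (as $\nu-1>d/2+1$) so that $\Abs{U\cdot\grad U}_{H^{\nu-1}}\lesssim \Abs{U}_{H^\nu}^2$.

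The main obstacle is to keep the nonlinear dependence on $\rho$ \emph{tame}, that is, to let the nonlinear factor $C(1/\rho_{\min},\Abs{\rho_{\rm eq}}_{W^{\nu,\infty}},\Abs{\rho}_{H^\nu})$ multiply the simple linear quantity $\Abs{\rho}_{H^\nu}+\eps\Abs{U}_{H^\nu}^2$. This is achieved through two standard but delicate ingredients: a composition estimate for $F(\rho_{\rm eq}+\eps\rho)$ with $F(t)=1/t$ applied away from zero, giving $\Abs{F(\rho_{\rm eq}+\eps\rho)-F(\rho_{\rm eq})}_{H^\nu}\leq C(1/\rho_{\min},\Abs{\rho}_{H^\nu})\Abs{\rho}_{H^\nu}$; and Moser/Kato--Ponce commutator estimates to handle all brackets of $\Lambda^s$ or $\dz^k$ with the variable coefficient $a$. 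With these at hand, the estimates at every differentiation step close, and summing horizontal and vertical contributions yields exactly the stated inequality.
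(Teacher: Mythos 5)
Your proposal is correct and follows essentially the same route as the paper: a weak formulation/energy estimate in which the boundary terms cancel, tangential regularity via commutators of $\Lambda^s$ with the coefficient, vertical regularity recovered from the equation by solving for $\dz^2 P$, and an induction combining the horizontal and vertical steps, with Kato--Ponce and composition estimates keeping the dependence tame. The one point worth making explicit is that $[\Lambda^s,a]$ with $a=1/(\rho_{\rm eq}+\eps\rho)$ cannot be estimated by Kato--Ponce directly, since $a\notin H^s(\cS)$ (it does not decay horizontally); one must split off $1/\rho_{\rm eq}$, which commutes with $\Lambda^s$ because it depends only on $z$ --- precisely the subtraction your composition estimate for $F(\rho_{\rm eq}+\eps\rho)-F(\rho_{\rm eq})$ already performs, and which also yields the factor $\eps$ in front of the commutator remainder exploited in the paper.
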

\begin{proof}
{\it Since we work in a domain with boundaries, we need to distinguish
horizontal and vertical derivatives; to this purpose, we use here the
$H^{s,k}$ spaces introduced in \eqref{defHsk}, and we repeatedly use the
continuous embedding $H^{s+1/2,1}\subset
L^\infty((-H,0);H^{s}(\R^d))$ (see for instance Proposition 2.12 in \cite{Lannes_book}).}\\
Let us consider first  the following more general boundary value problem, with ${\bf
  f}\in C([-H,0];L^2(\cS)^3)$ such that ${\bf
  f}_{\vert_{z=-H,0}}=0$, $g\in L^2(\cS)$ and $h\in
C([-H,0];L^2(\R^d))$,
\begin{equation}\label{pbbase}
\left\lbrace
\begin{array}{l}
\dsp -\grad\cdot \frac{1}{\rho_{\rm eq}+\eps \rho}\grad Q=\eps \grad \cdot
{\bf f} +\eps \abs{D} g
+\dz h,\\
\dsp -\frac{1}{\rho_{\rm eq}+\eps \rho}\dz Q_{\vert_{z=-H,0}}=h_{\vert_{z=-H,0}}.
\end{array}\right.
\end{equation}
Multiplying by $Q$ and integrating by parts in both sides of the
equation (note that the boundary terms cancel  each other), we get
$$
\big(\frac{1}{\rho_{\rm eq}+\eps\rho}\grad Q,\grad Q\big)=-\eps \big({\bf
  f},\grad Q \big)+\eps (g,\abs{D} Q)- (h,\dz Q),
$$
from which one readily gets, with $\rho_{\rm max}=\Abs{\rho_{\rm eq}+\eps\rho}_\infty$,
\begin{equation}\label{estimL2}
\frac{1}{\rho_{\rm max}}\Abs{\grad Q}_2\leq \eps \Abs{{\bf f}}_2 +\eps \Abs{g}_2+\Abs{h}_2.
\end{equation}
Applying this estimate with $Q=P$, ${\bf f}=U\cdot \grad U$, $g=0$ and
$h=\frac{\rho g}{\rho_{\rm eq}+\eps \rho}$, we get the following
$L^2$-control on the solution $\grad P$ to \eqref{eqpression},
\begin{equation}\label{estimL2P}
\frac{1}{\rho_{\rm max}}\Abs{\grad P}_2\leq \eps \Abs{U\cdot \grad U}_2 +\Abs{\frac{\rho g}{\rho_{\rm eq}+\eps \rho}}_2.
\end{equation}
Horizontal derivatives of $\grad P$ can also be controlled in $L^2$ by
applying $\Lambda^{r-1}\abs{D}$ ($0\leq r\leq s$ and $s >d/2+3/2$) to \eqref{eqpression} and using
\eqref{pbbase} with $Q=\Lambda^{r-1}\abs{D}P$, ${\bf
  f}=\frac{1}{\eps}[\Lambda^{r-1}\abs{D},\frac{1}{\rho_{\rm eq}+\eps \rho}]\grad Q$,
$g=\Lambda^{r-1}\grad\cdot (U\cdot \grad U)$, and
$h=\Lambda^{r-1}\abs{D}\Big(\frac{\rho g}{\rho_{\rm eq}+\eps
  \rho}\Big)$. The estimate \eqref{estimL2} then yields
\begin{align}
\nonumber
\frac{1}{\rho_{\rm max}}\Abs{\Lambda^{r-1}&\abs{D}\grad P}_2\leq \eps
\Abs{\frac{1}{\eps}[\Lambda^{r-1}\abs{D},\frac{1}{\rho_{\rm eq}+\eps
    \rho}]\grad P}_2\\
\label{estimhorP}
&+\eps \Abs{\Lambda^{r-1}\grad\cdot (U\cdot \grad U)}_2 +\Abs{\Lambda^{r-1}\abs{D}\Big(\frac{\rho g}{\rho_{\rm eq}+\eps
  \rho}\Big)}_2;
\end{align}
we now turn to control the three terms in the right-hand-side:\\
- {\it Control of $\Abs{\frac{1}{\eps}[\Lambda^{r-1}\abs{D},\frac{1}{\rho_{\rm eq}+\eps
    \rho}]\grad P}_2$}. Remarking that $\frac{1}{\rho_{\rm eq}+\eps
    \rho}=\frac{1}{\rho_{\rm eq}}- \frac{\eps\rho}{\rho_{\rm eq}(\rho_{\rm eq}+\eps
    \rho)}$ and that $[\Lambda^{r-1}\abs{D},\frac{1}{\rho_{\rm
      eq}}]=0$, we have
\begin{align*}
\bAbs{\frac{1}{\eps}[\Lambda^{r-1}\abs{D},\frac{1}{\rho_{\rm eq}+\eps
    \rho}]\grad
  P}_2&=\bAbs{[\Lambda^{r-1}\abs{D},\frac{\rho}{\rho_{\rm eq}(\rho_{\rm eq}+\eps
    \rho)}]\grad P}_2\\
&\leq C(\frac{1}{\rho_{\rm min}},\Abs{\rho}_{W^{1,\infty}})\Abs{\rho}_{H^{s,1}}
\Abs{\grad P}_{H^{r-1/2,1}},
\end{align*}
where we used the commutator estimate \eqref{estcom} and the
assumption that $s>d/2+3/2$ to derive the
second inequality.\\
- {\it Control of $\Abs{\Lambda^{r-1}\grad\cdot (U\cdot \grad U)}_2
  $}. Recalling that $\grad\cdot U=0$, we can write classically
$$
\grad\cdot \big(U\cdot \grad U\big)=\sum_{i,j=x,y,z} \partial_i
U_j \partial_j U_i,
$$
and therefore, using the product estimate \eqref{estprod} and the
assumption that $s >d/2+3/2$ (recall also that $0\leq r\leq s$),
$$
\Abs{\Lambda^{r-1}\grad\cdot (U\cdot \grad U)}_2\leq
\Abs{U}_{H^{s,2}}^2.
$$
- {\it Control of $\Abs{\Lambda^{r-1}\abs{D}\Big(\frac{\rho g}{\rho_{\rm eq}+\eps
  \rho}\Big)}_2$.} Without any difficulty, one gets
$$
\Abs{\Lambda^{r-1}\abs{D}\Big(\frac{\rho g}{\rho_{\rm eq}+\eps
  \rho}\Big)}_2\leq C(\frac{1}{\rho_{\rm min}},\Abs{\rho}_{W^{1,\infty}}) \Abs{\rho}_{H^{s,1}}.
$$
\medbreak
Gathering these three estimates, together with \eqref{estimL2P} and
\eqref{estimhorP}, we finally get
\begin{align*}
\Abs{\grad P}_{H^{r,0}}\leq &C(\frac{1}{\rho_{\min}},\Abs{\rho_{\rm
    eq}}_\infty,\Abs{\rho}_{W^{1,\infty}}) \\
   & \times \Big( \Abs{\rho}_{H^{s,1}}+\eps\Abs{U}_{H^{s,2}}^2+\eps\Abs{\grad P}_{H^{r-1/2,1}}\Big),
\end{align*}
for all $0\leq r\leq s$. We now prove that it is possible to obtain an estimate on the $H^{r,1}$-norm instead of the
$H^{r,0}$ norm in the left-hand-side, at the cost of increasing
slightly the regularity in $z$ of $\rho$. Using the equation satisfied by $P$; one has
\begin{align}
\nonumber
-\dz^2 P&=(\rho_{\rm eq}+\eps\rho)\dz\big(\frac{1}{\rho_{\rm eq}+\eps
  \rho}\big)\dz P+(\rho_{\rm eq}+\eps\rho)\nabla\cdot
\frac{1}{\rho_{\rm eq}+\eps\rho}\nabla P\\
\label{dz2P}
&+\eps (\rho_{\rm eq}+\eps\rho)\sum_{i,j=x,y,z} \partial_i
U_j \partial_j U_i,
+(\rho_{\rm eq}+\eps\rho)\dz \Big(\frac{\rho g}{\rho_{\rm eq}+\eps \rho}\Big);
\end{align}
controlling the right-hand-side in $H^{r-1,0}$ through \eqref{estprod}
shows that $\Abs{\Lambda^{r-1}\dz^2 P}_{H^{r,0}}$  is bounded from above by 
$$
C(\frac{1}{\rho_{\min}},\Abs{\rho_{\rm
    eq}}_{W^{1,\infty}},\Abs{\rho}_{W^{1,\infty}})\Big(\Abs{\rho}_{H^{s,2}}+\eps\Abs{U}_{H^{s,2}}^2+\eps\Abs{\grad P}_{H^{r-1/2,1}}\Big),
$$
and we have therefore obtained that
\begin{align*}
\Abs{\grad P}_{H^{r,1}} \leq &C(\frac{1}{\rho_{\min}},\Abs{\rho_{\rm
    eq}}_{W^{1,\infty}},\Abs{\rho}_{W^{1,\infty}}) \\
    &\times \Big(\!\Abs{\rho}_{H^{s,2}}+\eps\Abs{U}_{H^{s,2}}^2+\eps\Abs{\grad P}_{H^{r-1/2,1}}\!\Big);
\end{align*}
by a simple
finite induction on $r$, we therefore get
\begin{equation}\label{controP1}
\Abs{\grad P}_{H^{s,1}}\leq C(\frac{1}{\rho_{\min}},\Abs{\rho_{\rm
    eq}}_{W^{1,\infty}},\Abs{\rho}_{W^{1,\infty}})\big(\Abs{\rho}_{H^{s,2}}+\eps\Abs{U}_{H^{s,2}}^2\big).
\end{equation}
Using the product estimate \eqref{prodalg}, we also get from
\eqref{dz2P} that for all $0\leq k\leq \nu$,
\begin{align*}
\Abs{\grad P&}_{H^{\nu,k}}\leq \Abs{\grad P}_{H^{\nu,1}}+\Abs{\dz^2
  P}_{H^{\nu-1,k-1}}\\
&\leq  C\big(\frac{1}{\rho_{\min}},\Abs{\rho_{\rm
    eq}}_{W^{\nu,\infty}},\Abs{\rho}_{H^{\nu}}\big)(\Abs{\rho}_{H^{\nu}}+\eps \Abs{U}_{H^\nu}^2+\Abs{\grad
P}_{H^{\nu,k-1}})
\end{align*}
(we used here the assumption that $\nu > d+2$). The result then
follows from an induction on $k$ and \eqref{controP1}.
\end{proof}
Applying $\Lambda^{\nu-k}\dz^k$ ($k=0,1,2$) to the equations of
\eqref{Euler_pert}, we get
$$
\left\lbrace
\begin{array}{l}
\dsp (\dt \tilde U_k+\eps U\cdot\grad \tilde
U_k)=F_1+F_2,\\
\dsp \dt \tilde \rho_k+\eps U\cdot\grad \tilde\rho_k=f_1+f_2,\\
\dsp \nabla_{X,z}\cdot \tilde U_k=0
\end{array}\right.
$$
where we denoted $\tilde U_k=\Lambda^{\nu-k}\dz^k U$ and $\tilde
\rho_k=\Lambda^{\nu-k}\dz^k \rho$, and where
\begin{align*}
F_1&=-\Lambda^{\nu-k}\dz^k\Big(\frac{1}{\rho_{\rm eq}+\eps \rho}\grad
P\Big)-\Lambda^{\nu-k}\dz^k\Big(\frac{\rho}{\rho_{\rm eq}+\eps \rho} g{\bf
  e_z}\Big),\\
F_2&=-\eps(\Lambda^{\nu-k}\dz^k,U)\cdot \grad U
\end{align*}
and
\begin{align*}
f_1=-\Lambda^{\nu-k}\dz^k\Big(w\frac{d}{dz}\rho_{\rm eq}\Big), \qquad f_2=-\eps [\Lambda^{\nu-k}\dz^k,U]\cdot \grad \rho.
\end{align*}
Multiplying the first equation by
$\tilde U_k$ and the second one by $\tilde \rho_k$, and integrating by
parts, we get (recall that $\grad\cdot U=0$ and that $w$ vanishes at the boundaries), 
\begin{align*}
\frac{d}{dt}\Big(\frac{1}{2}\Abs{\tilde U_k}^2_2+\frac{1}{2}\Abs{\tilde\rho_k}_2^2\Big)&=(F_1+F_2,\tilde
U)+(f_1+f_2,\tilde \rho)\\
&\leq (\Abs{F_1}_2+\Abs{F_2}_2)\Abs{\tilde U_k}_2+(\Abs{f_1}_2+\Abs{f_2}_2)\Abs{\tilde \rho_k}_2
\end{align*}
We easily get from the product estimate \eqref{prodalg} and Lemma
\ref{lemmpress} (for $F_1$ and $f_1$) and the commutator estimate
\eqref{estcomf} (for $F_2$ and $f_2$) that
$$
\Abs{F_1}_2+\Abs{F_2}_2+\Abs{f_1}_2+\Abs{f_2}_2\leq C\big(\frac{1}{\rho_{\min}},\Abs{\rho_{\rm
    eq}}_{W^{\nu,\infty}},\Abs{\rho}_{H^{\nu}},\Abs{U}_{H^\nu}\big)
$$
and therefore
\begin{align*}
\frac{d}{dt}\Big(\Abs{\tilde
  U_k}_2^2+\Abs{\tilde\rho_k}_2^2\Big)&\leq  C\big(\frac{1}{\rho_{\min}},\Abs{\rho_{\rm
    eq}}_{W^{\nu,\infty}},\Abs{\rho}_{H^{n}},\Abs{U}_{H^\nu}\big)
\big(\Abs{\tilde U_k}_2+\Abs{\tilde \rho_k}_2),\\
&\leq C\big(\frac{1}{\rho_{\min}},\Abs{\rho_{\rm
    eq}}_{W^{\nu,\infty}},\Abs{\rho}_{H^{\nu}},\Abs{U}_{H^\nu}\big).
\end{align*}
Summing over all $0\leq k\leq \nu$, this yields
$$
\frac{d}{dt}\Big(\Abs{
  U}_{H^\nu}^2+\Abs{\rho}_{H^\nu}^2\Big)
\leq C\big(\frac{1}{\rho_{\min}},\Abs{\rho_{\rm
    eq}}_{W^{\nu,\infty}},\Abs{\rho}_{H^{\nu}},\Abs{U}_{H^\nu}\big),
$$
which is the desired a priori estimate.
\end{proof}
%\begin{remark}\label{remcomm}
%One can check that without the terms $F_1$ and $f_1$ in \eqref{systHO}, one would have a factor $\eps$ in front of the right-hand-side in \eqref{eqNRJ1} and therefore an existence time of size $O(1/\eps)$ instead of $O(1)$ in Theorem \ref{theorem1}. These two terms are due to the lack of commutation between the vertical differentiation operator $\dz$ and $\rho_{\rm eq}$ and $\rho'_{\rm eq}$. This obstruction can be overcome in the case where the Br\"unt-Vais\"al\"a frequency $N^2=g \frac{-\rho'_{\rm eq}}{\rho_{\rm eq}}$ does not depend on $z$, as shown below in Theorem \ref{theorem2}.
%\end{remark}
\section{A linear approximation}\label{sectlin}

We consider here the linear approximation to the full system of equations \eqref{Euler_pert}-\eqref{BC_pert}, formally obtained by setting $\eps=0$ in the equations. We show in \S \ref{secterrlin} that this approximation is as expected of precision $O(\eps)$ and then turn to analyze its behavior. As often in oceanography (e.g. \cite{Gill}), it is convenient to describe the vertical dependence of the functions involved in the problem by decomposing them on a Sturm-Liouville basis. General facts on Sturm-Liouville decompositions are recalled in \S \ref{SectSL}. We then show in \S \ref{sectNconst} that, when the so-called Brunt-Vais\"al\"a frequency $N$ is independent of $z$,  the coefficients of these decompositions are found by solving {\it uncoupled} dispersive perturbations of wave systems (whose speed are related to the eigenvalues of the Sturm-Liouville problem satisfied by the vertical velocity). We then turn to study in \S \ref{sectNpasconst} the general case where $N$ is not constant. We show that for such a configuration, the dispersive terms induce a coupling between the different modes of the decomposition. In both cases (constant and non constant $N$), we give sufficient conditions to improve the speed of convergence of the modal decomposition.

\subsection{Error estimate for the linear approximation}\label{secterrlin}

We consider here the linearized system obtained by taking $\eps=0$
in \eqref{Euler_pert}-\eqref{BC_pert}. Decomposing the equation on the
velocity into its horizontal and vertical components, this yields
\begin{equation}
\label{Euler_pert_lin}
\left\lbrace
\begin{array}{l}
\dsp \dt V +\frac{1}{\rho_{\rm eq}}\nabla P=0\\
\dsp \dt w +\frac{1}{\rho_{\rm eq}}\dz P+\frac{\rho}{\rho_{\rm eq}} g=0,\\
\dsp \dt \rho+w\frac{d}{dz}\rho_{\rm eq}=0,\\
\dsp \nabla_{X,z}\cdot U=0
\end{array}\right.
\end{equation}
with the boundary conditions
\begin{equation}\label{BC_pert_lin}
w_{\vert_{z=-H}}=w_{\vert_{z=0}}=0.
\end{equation}
As shown in the following proposition, the solutions of this linear model provide an $O(\eps)$ approximation to the exact solution of the full nonlinear problem \eqref{Euler_pert}-\eqref{BC_pert}. 
 \begin{proposition}
 Let the assumptions of Theorem \ref{theorem1} be satisfied and denote by $T>0$ the existence time of the solution $(U,\rho)$ of the nonlinear problem \eqref{Euler_pert}-\eqref{BC_pert} provided by this theorem. There exists also a unique solution 
$(U^{\rm lin},\rho^{\rm lin})\in
C([0,T];H^{\nu}(\cS)^{d+2})$ to the linear problem \eqref{Euler_pert_lin}-\eqref{BC_pert_lin} with same initial data, and the following error estimate holds
$$
\Vert (\rho-\rho^{\rm lin}, U-U^{\rm lin})\Vert_{L^\infty([0,T]\times H^{\nu-1})} \leq c_3 \eps
$$
with $c_3=C\big(T,\frac{1}{\rho_{\min}},M_0,\Abs{\rho_{\rm eq}}_{W^{\nu,\infty}},H,\Abs{\rho^0}_{H^{\nu}},\Abs{U^0}_{H^\nu}\big)$.
\end{proposition}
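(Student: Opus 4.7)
I would organise the argument in four steps, following closely the strategy of Theorem \ref{theorem1}.

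\textbf{Step 1 (Well-posedness of the linear system).} The system \eqref{Euler_pert_lin}-\eqref{BC_pert_lin} is obtained from \eqref{Euler_pert}-\eqref{BC_pert} by formally setting $\eps=0$; since its coefficients no longer depend on the unknowns, the proof of Theorem \ref{theorem1} applies verbatim (and is strictly simpler). Lemma \ref{lemmpress} with $\eps=0$ controls the pressure $P^{\rm lin}$ in terms of $\rho^{\rm lin}$, yielding a unique solution $(U^{\rm lin},\rho^{\rm lin})\in C([0,T];H^\nu(\cS)^{d+2})$ on the same existence interval as the nonlinear one, with bounds of the same form as $c_2$.

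\textbf{Step 2 (Equations for the difference).} Set $\tilde U = U-U^{\rm lin}$, $\tilde\rho=\rho-\rho^{\rm lin}$ and $\tilde P=P-P^{\rm lin}$. Using the identity $\frac{1}{\rhoeq+\eps\rho}=\frac{1}{\rhoeq}-\frac{\eps\rho}{\rhoeq(\rhoeq+\eps\rho)}$, subtraction of the two systems gives
\begin{equation*}
\dt\tilde U+\frac{1}{\rhoeq}\grad\tilde P+\frac{\tilde\rho}{\rhoeq}g\mathbf{e}_z=\eps\,\mathbf{F},\qquad \dt\tilde\rho+\tilde w\,\tfrac{d}{dz}\rhoeq=\eps\,f,
\end{equation*}
together with $\grad\cdot\tilde U=0$ and $\tilde w_{|z=-H,0}=0$, where $\mathbf{F}$ collects the convective term $-U\cdot\grad U$ and the terms $\frac{\rho}{\rhoeq(\rhoeq+\eps\rho)}\grad P$ and $\frac{\rho^2}{\rhoeq(\rhoeq+\eps\rho)}g\mathbf{e}_z$ produced by the above expansion, and $f=-U\cdot\grad\rho$. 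Thanks to the product estimates already used in Theorem \ref{theorem1} and the bound on $\grad P$ provided by Lemma \ref{lemmpress}, one has $\Abs{\mathbf{F}}_{H^{\nu-1}}+\Abs{f}_{H^{\nu-1}}\leq C(c_2)$.

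\textbf{Step 3 (Pressure estimate for $\tilde P$).} Taking the divergence of the momentum equation for $\tilde U$ and using $\grad\cdot\tilde U=0$ together with the boundary condition $\tilde w_{|z=-H,0}=0$, one sees that $\tilde P$ solves a problem of the form \eqref{pbbase} with $\eps=0$ in the principal operator, source term $\dz(\tilde\rho g/\rhoeq)+\eps\,\grad\cdot\mathbf{F}$, and boundary data of size $O(\Abs{\tilde\rho}_{H^{\nu-1/2,1}}+\eps\,c_2)$. Applying Lemma \ref{lemmpress} (with $\eps=0$) at regularity $\nu-1$ therefore yields
\begin{equation*}
\Abs{\grad\tilde P}_{H^{\nu-1}}\leq C\bigl(\tfrac{1}{\rho_{\min}},\Abs{\rhoeq}_{W^{\nu,\infty}}\bigr)\bigl(\Abs{\tilde\rho}_{H^{\nu-1}}+\eps\,c_2^2\bigr).
\end{equation*}

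\textbf{Step 4 (Energy estimate and Gronwall).} For $0\leq k\leq \nu-1$, apply $\Lambda^{\nu-1-k}\dz^k$ to the system of Step 2, test the velocity equation against $\Lambda^{\nu-1-k}\dz^k\tilde U$ and the density equation against $\Lambda^{\nu-1-k}\dz^k\tilde\rho$, and integrate by parts exactly as in the end of the proof of Theorem \ref{theorem1}. The pressure contribution is bounded via Step 3, and all the remaining contributions admit bounds of the form $C(c_2)(\Abs{\tilde U}_{H^{\nu-1}}^2+\Abs{\tilde\rho}_{H^{\nu-1}}^2)+C(c_2)\,\eps^2$. Summing over $k$, this gives
\begin{equation*}
\frac{d}{dt}\bigl(\Abs{\tilde U}_{H^{\nu-1}}^2+\Abs{\tilde\rho}_{H^{\nu-1}}^2\bigr)\leq C(c_2)\bigl(\Abs{\tilde U}_{H^{\nu-1}}^2+\Abs{\tilde\rho}_{H^{\nu-1}}^2\bigr)+C(c_2)\,\eps^2,
\end{equation*}
and since $\tilde U_{|t=0}=\tilde\rho_{|t=0}=0$, Gronwall's lemma on $[0,T]$ delivers the $O(\eps)$ estimate with a constant $c_3$ of the announced form.

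\textbf{Expected main difficulty.} The delicate point is the pressure estimate of Step 3: the source $\mathbf{F}$ contains the genuinely nonlinear quadratic expressions $U\cdot\grad U$ and $\rho\,\grad P$, which can only be controlled at one derivative below the regularity at which $(U,\rho)$ is bounded by Theorem \ref{theorem1}. This loss of one derivative is precisely what forces the error to be measured in $H^{\nu-1}$ rather than in $H^\nu$, and dictates the bookkeeping of the commutator and product estimates throughout Steps 3 and 4.
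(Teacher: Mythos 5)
Your proposal is correct and follows essentially the same route as the paper: linear well-posedness comes for free from the proof of Theorem \ref{theorem1}, the pressure difference is controlled via Lemma \ref{lemmpress} by $\Abs{\tilde\rho}_{H^{\nu-1}}+O(\eps)$, and an $H^{\nu-1}$ energy estimate plus Gronwall closes the argument. The only (cosmetic) difference is that the paper first splits $P=P^{\rm L}+\eps P^{\rm NL}$ and estimates $\grad(P^{\rm L}-P^{\rm lin})$ and $\grad P^{\rm NL}$ separately, whereas you estimate $\grad(P-P^{\rm lin})$ directly; the content is identical.
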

\begin{proof}
It is obvious that the solution to the linear system is defined on the same time interval as the solution to the nonlinear system provided by Theorem \ref{theorem1}. We need to prove the error estimate. Let us decompose the pressure $P$ in the nonlinear problem under the form $P=P^{\rm L} + \eps P^{\rm NL}$, 
with
$$
\left\lbrace
\begin{array}{l}
\dsp -\grad\cdot \frac{1}{\rho_{\rm eq}}\grad P^{\rm L}=\dz \Big(\frac{\rho g}{\rho_{\rm eq}}\Big),\\
\dsp -\frac{1}{\rho_{\rm eq}}\dz P^{\rm L}_{\vert_{z=-H,0}}=\Big(\frac{\rho g}{\rho_{\rm eq}}\Big)_{\vert_{z=-H,0}},
\end{array}\right.
$$
and
$$
\left\lbrace
\begin{array}{l}
\dsp -\grad\cdot \frac{1}{\rho_{\rm eq}+\eps \rho}\grad P^{\rm NL}=\grad \cdot F,\\
\dsp -\frac{1}{\rho_{\rm eq}+\eps \rho}\dz P^{\rm NL}_{\vert_{z=-H,0}}=F_{\rm v},
\end{array}\right.
$$
where 
$$
F=U\cdot \grad U
-\frac{\rho^2 }{\rho_{\rm eq}(\rho_{\rm eq}+\eps \rho)} g{\bf e_z} - \frac{\rho}{\rho_{\rm eq}(\rho_{\rm eq}+\eps \rho)}\grad P^{\rm L},
$$
and $F_{\rm h}$ and $F_{\rm v}$ denote its horizontal and vertical components.
The difference $\widetilde{U}=U-U^{\rm lin}$, $\widetilde{\rho}=\rho-\rho^{\rm lin}$ and $\widetilde{P}=P^{\rm L}-P^{\rm lin}$ satisfies therefore
$$
\left\lbrace
\begin{array}{l}
\dsp \dt \widetilde{V} +\frac{1}{\rho_{\rm eq}}\nabla \widetilde{P}=-\eps \frac{1}{\rho_{\rm eq}+\eps\rho}\nabla P^{\rm NL}+\eps F_{\rm h}\\
\dsp \dt \widetilde{w} +\frac{1}{\rho_{\rm eq}}\dz \widetilde{P}+\frac{\widetilde{\rho}}{\rho_{\rm eq}} g=-\eps \frac{1}{\rho_{\rm eq}+\eps\rho}\dz P^{\rm NL}+\eps F_{\rm v},\\
\dsp \dt \widetilde{\rho}+\widetilde{w}\frac{d}{dz}\rho_{\rm eq}=\eps f,
\end{array}\right.
$$
with $f=-\eps U\cdot \grad \rho$. Proceeding as for Lemma \ref{lemmpress}, one gets
$$
\Vert \grad \widetilde{P} \Vert_{H^{\nu-1}}\lesssim \Vert \widetilde{\rho}\Vert_{H^{\nu-1}}
\quad\mbox{ and }\quad
\Vert \grad {P}^{\rm NL} \Vert_{H^{\nu-1}}\leq C\big( \Vert(U, \rho)\Vert_{H^\nu}\big)
$$
while $F$ and $f$ are also bounded from above in $H^{\nu-1}(\cS)$ by $ C\big( \Vert(U, \rho)\Vert_{H^\nu}\big)$. It follows as in the proof of Theorem \ref{theorem1} that
$$
\frac{d}{dt}\Big( \Vert \widetilde{U}\Vert_{H^{\nu-1}}^2+\Vert \widetilde{\rho}\Vert^2_{H^{\nu-1}}\Big) \leq C \Vert \widetilde{\rho}\Vert_{H^{\nu-1}}^2 + \eps C\big( \Vert(U, \rho)\Vert_{H^\nu}\big)\Big( \Vert \widetilde{U}\Vert_{H^{\nu-1}}+\Vert \widetilde{\rho}\Vert_{H^{\nu-1}}\Big),
$$
and the estimate provided in the proposition classically follows using a Gronwall-type argument.
\end{proof}
\subsection{Modal decomposition}\label{SectSL}

Simple manipulations of the linear equations  \eqref{Euler_pert_lin}-\eqref{BC_pert_lin} show that the vertical velocity $w$ must satisfy
\begin{equation}\label{eqbig}
\dt^2\Big[\Big(\Delta +\frac{1}{\rho_{\rm eq}}\dz \big(\rho_{\rm eq}\dz
\cdot \big)\Big)w\Big]+N^2 \Delta w=0,
\end{equation}
where $N=N(z)$ is the Br\"unt-Vais\"al\"a frequency,
$$
N^2=-\frac{\rho'_{\rm eq}}{\rho_{\rm eq}}g.
$$

%\begin{remark}
%In the physical literature, the derivative of the equilibrium density
%$\rhoeq$ is neglected in the first term of \eqref{eqbig} -- this is
%called the Boussinesq approximation -- leading to the simplified
%equation
%$$
%\dt^2\Big[\Big(\Delta +\dz^2\Big)w\Big]+N^2 \Delta w=0;
%$$
%we do not need to make this approximation in our analysis.
%\end{remark}

In order to solve \eqref{eqbig}, it is therefore quite natural to  decompose $w$ on the orthonormal 
basis $({\bf f}_n)_{n\in\N}$ of $L^2([-1,0],\rho_{\rm eq}N^2dz)$ formed by the eigenfunctions of the following
Sturm-Liouville problem,
\begin{equation}\label{SL}
\left\lbrace
\begin{array}{l}
\dsp \frac{d}{dz}\big(\rho_{\rm eq}\frac{d}{dz}{\bf
  f}_n\big)+\frac{\rho_{\rm eq}N^2}{c_n^2}{\bf f}_n=0,\\
\dsp  {\bf f}_n(-H)={\bf f}_n(0)=0,
\end{array}\right.
\end{equation}
with corresponding eigenvalues $\dsp (\frac{1}{c_n^2})_{n\in\N}$ satisfying $c_1>c_2>\dots>c_n>\dots$ and
 $c_j>0$ for all $j\in\N^*$; we also have the asymptotic behavior $c_n=O(1/n)$ as $n\to \infty$.
\begin{example}\label{ex1}
We show in Figure \ref{fig:dummy} a typical stratification for an ocean of depth $H=3500{\rm m}$ (see \cite{Chelton}) and the corresponding Brunt-Vais\"al\"a frequency.
   \begin{figure}[!ht]
     \subfloat[Density (${\rm kg}.{\rm m}^{-3}$) \label{subfig-1:dummy}]{%
       \includegraphics[width=0.45\textwidth]{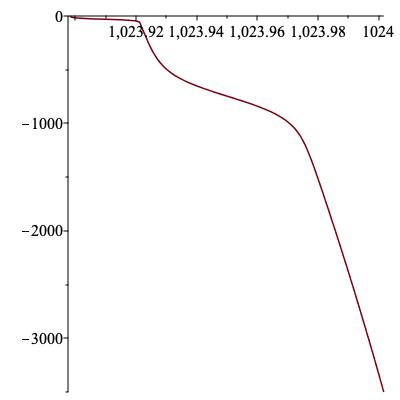}
     }
     \hfill
     \subfloat[$N$ (cycles/hour)\label{subfig-2:dummy}]{%
       \includegraphics[width=0.45\textwidth]{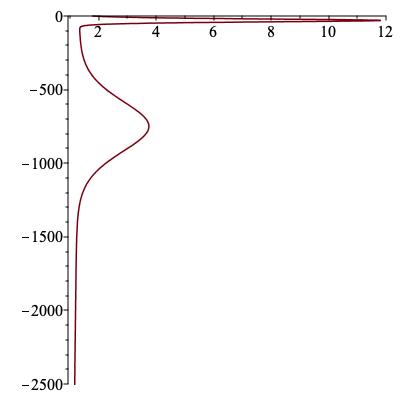}
     }
     \caption{Example of the vertical dependence of the density and Brunt-Vais\"al\"a frequency in an ocean}
     \label{fig:dummy}
   \end{figure}
The speeds associated to the first modes in the Sturm-Liouville problem \eqref{SL} are represented in Figure \ref{subfig2a}.
   \begin{figure}[!ht]
     \subfloat[For the ocean stratification of Fig. \ref{subfig-1:dummy} \label{subfig2a}]{%
       \includegraphics[width=0.49\textwidth]{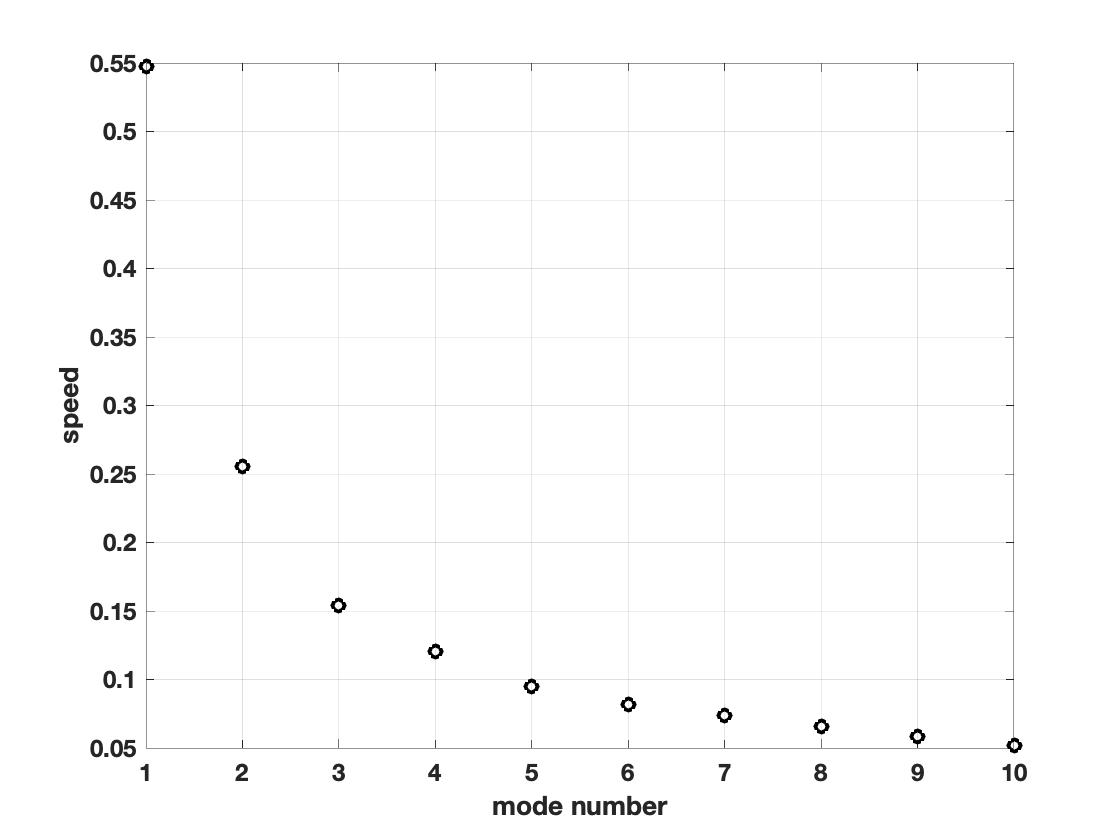}
     }
     \hfill
     \subfloat[For the sharp stratification \eqref{densitysharp} \label{subfig2b}]{%
       \includegraphics[width=0.49\textwidth]{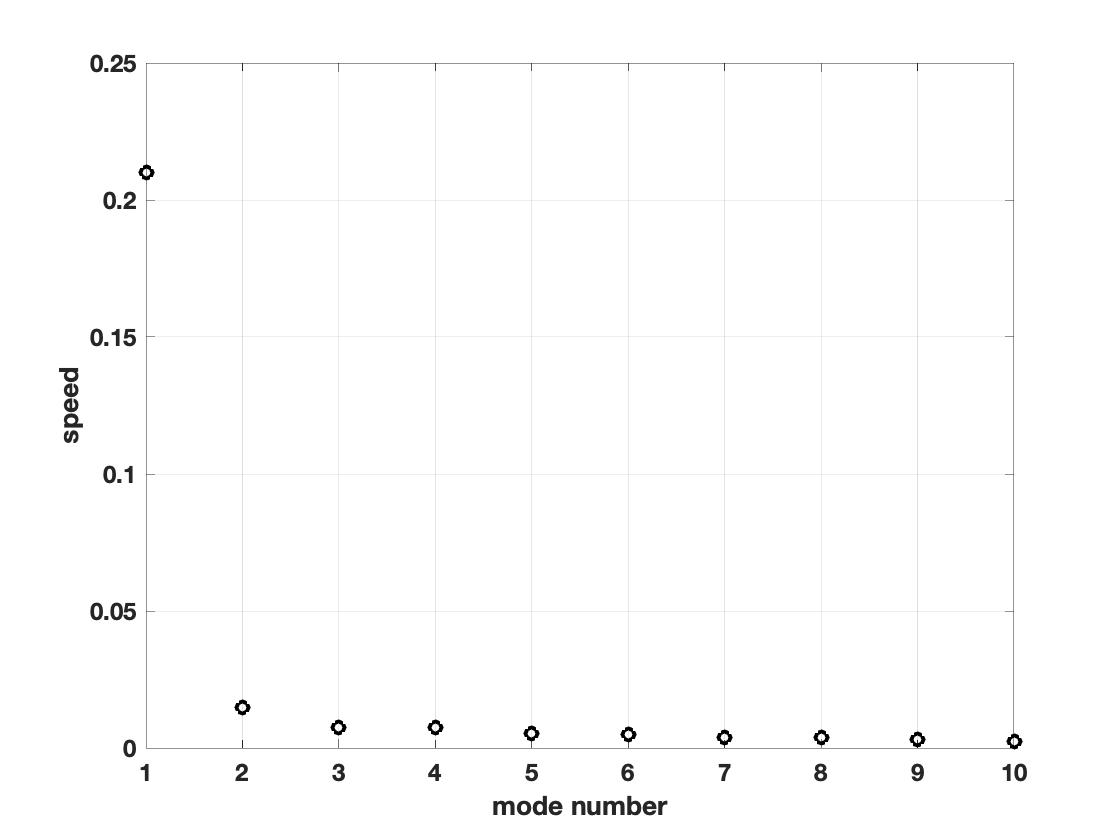}
     }
     \caption{The speeds associated to the first modes}
     \label{fig2}
   \end{figure}

%\begin{figure}
%\includegraphics[width=0.7\textwidth]{Figs/speed}
%\caption{The speeds associated to the first modes}
%\label{fig2}
%\end{figure}
Finally, the first six eigenfunctions of the Sturm-Liouville basis $({\bf f}_n)$ associated to \eqref{SL} are represented in Figure \ref{fig3}.
\begin{figure}
\includegraphics[width=0.9\textwidth]{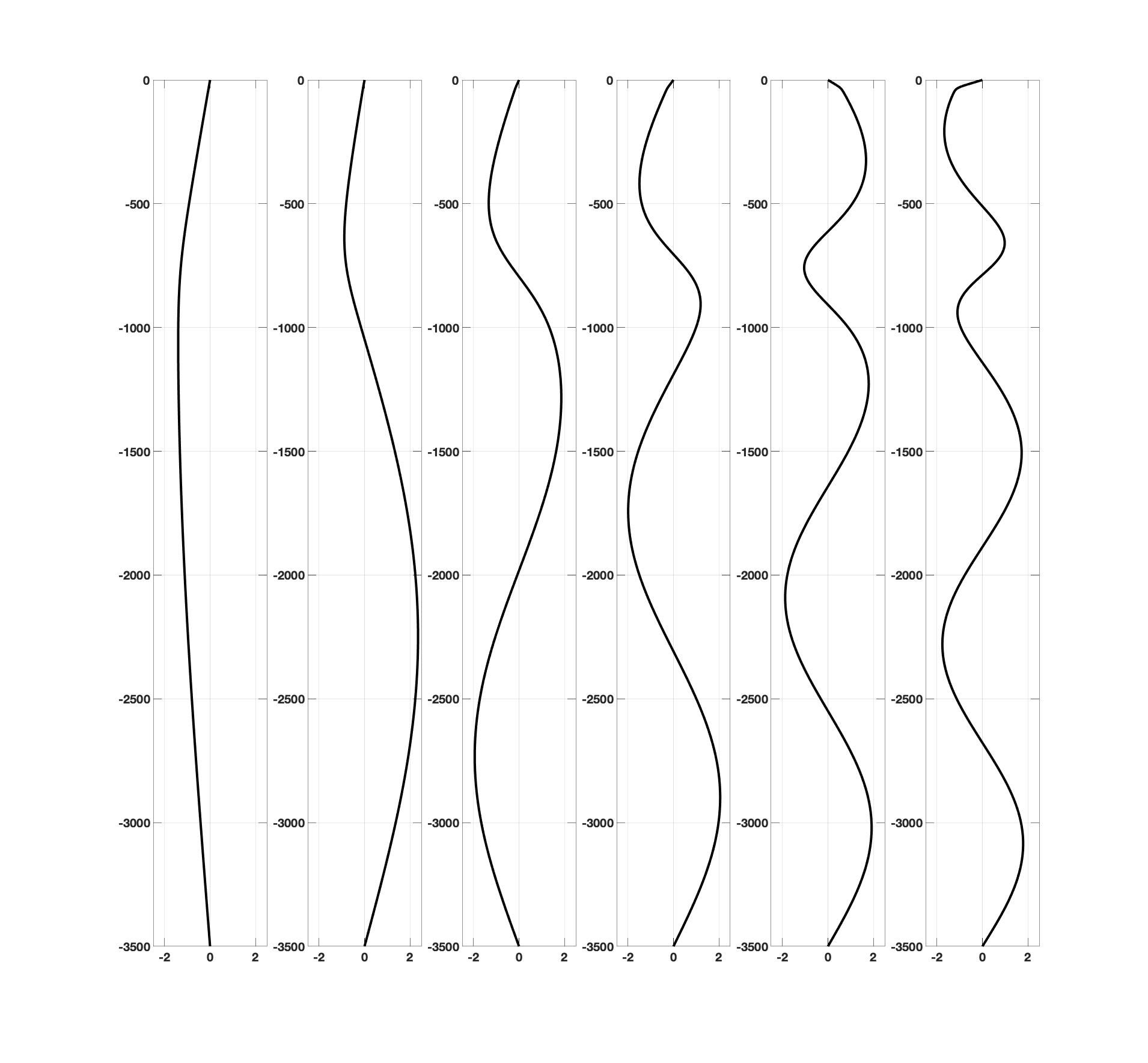}
\caption{The first $6$ vertical modes of the Sturm-Liouville decomposition \eqref{SL} (from left to right)}
\label{fig3}
\end{figure}
\end{example}
We therefore look for $w$ under the form
\begin{equation}\label{decompw}
w(t,X,z)=\sum_{n=1}^\infty w_n(t,X){\bf f}_n(z).
\end{equation}
Since the coefficients of the system \eqref{Euler_pert_lin} depend on
$z$, the velocity, pressure and density fields must be decomposed on
other orthonormal basis that are introduced in the following lemma.

\begin{lemma}\label{lemmebases}
Let $\rho_{\rm eq}\in W^{1,\infty}(-H,0)$ and $\alpha=\big(\int_{-H}^0\rhoeq^{-1}\big)^{-1/2}$,
assume that
$$
\inf_{z\in(-H,0)}\rho_{\rm eq}(z)>0 \quad\mbox{ and }\quad \inf_{z\in(-H,0)}N(z)>0,
$$
 and let 
$({\bf f}_n)_{n\geq 1}$ be the orthonormal basis of $L^2((-H,0),\rho_{\rm
  eq}N^2 dz)$ formed by the eigenfunctions of \eqref{SL}. Then\\
{\bf i.} The sequence $(\rho_{\rm eq}N^2{\bf f}_n)_{n\geq 1}$ forms an
orthonormal basis of the weighted space $L^2((-H,0),(\rho_{\rm eq}N^2)^{-1}dz)$.\\
{\bf ii.} The sequence $({\bf g}_n)_{n\geq 0}$ with ${\bf
  g}_0=\alpha\rho_{\rm eq}^{-1}$ (with $\alpha$ as above), and ${\bf
  g}_n=c_n{\bf f}_n'$ for $n\geq 1$ forms an orthonormal basis of
$L^2((-H,0),\rhoeq dz)$.\\
{\bf iii.} The sequence $(\rho_{\rm eq}{\bf g}_n)_{n\geq 0}$ forms an orthonormal basis of
$L^2((-H,0),\rhoeq^{-1} dz)$.
\end{lemma}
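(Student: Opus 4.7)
The plan is to reduce everything to routine calculations in the Sturm--Liouville space $L^2((-H,0),\rho_{\rm eq}N^2dz)$, exploiting two tools: first, that multiplication by a smooth positive weight is an isometric isomorphism between suitably weighted $L^2$ spaces; second, integration by parts against the equation \eqref{SL} together with the Dirichlet boundary condition ${\bf f}_n(\pm H)=0$ (abuse of notation: $-H$ and $0$).

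For \textbf{(i)} I would just observe that $f\mapsto \rho_{\rm eq}N^2 f$ is an isometry from $L^2((-H,0),\rho_{\rm eq}N^2dz)$ onto $L^2((-H,0),(\rho_{\rm eq}N^2)^{-1}dz)$, since
$\int (\rho_{\rm eq}N^2 f)^2 (\rho_{\rm eq}N^2)^{-1}dz = \int f^2\,\rho_{\rm eq}N^2 dz$.
Such a map sends the orthonormal basis $({\bf f}_n)$ to the orthonormal basis $(\rho_{\rm eq}N^2 {\bf f}_n)$. The same argument, with multiplication by $\rho_{\rm eq}$, gives \textbf{(iii)} once \textbf{(ii)} is established.

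The bulk of the work is \textbf{(ii)}. For orthonormality, $\Abs{{\bf g}_0}^2_{L^2_{\rho_{\rm eq}}}=\alpha^2\int_{-H}^0 \rho_{\rm eq}^{-1}dz=1$ by the very definition of $\alpha$; for $n,m\geq 1$ I would integrate by parts
$$
\int_{-H}^0 c_n c_m {\bf f}_n' {\bf f}_m' \rho_{\rm eq}\,dz = -c_n c_m\int_{-H}^0(\rho_{\rm eq}{\bf f}_n')'{\bf f}_m dz,
$$
the boundary term vanishing by the Dirichlet condition on ${\bf f}_m$, and then replace $(\rho_{\rm eq}{\bf f}_n')'$ by $-\rho_{\rm eq}N^2{\bf f}_n/c_n^2$ via \eqref{SL}, which yields $\delta_{nm}$; the mixed term $\langle {\bf g}_0,{\bf g}_n\rangle$ reduces to $\alpha c_n\int_{-H}^0{\bf f}_n' dz=\alpha c_n[{\bf f}_n]_{-H}^0=0$.

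The nontrivial point, and the main obstacle, is completeness. My plan is: take $u\in L^2((-H,0),\rho_{\rm eq}dz)$ orthogonal to every ${\bf g}_n$, and set $U(z):=\int_{-H}^z u(s)\,ds$, which vanishes at $z=-H$ by definition and at $z=0$ because $\langle u,{\bf g}_0\rangle=0$ forces $\int_{-H}^0 u\,dz=0$. Then integration by parts gives
$$
0=\langle u,{\bf g}_n\rangle_{L^2_{\rho_{\rm eq}}}=c_n\int_{-H}^0 u\,{\bf f}_n'\rho_{\rm eq}\,dz = -c_n\int_{-H}^0 U(\rho_{\rm eq}{\bf f}_n')'dz=\frac{1}{c_n}\int_{-H}^0 U\,{\bf f}_n\,\rho_{\rm eq}N^2\,dz,
$$
the boundary terms vanishing thanks to the Dirichlet conditions on both $U$ and ${\bf f}_n$. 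Thus $U$ is orthogonal to every ${\bf f}_n$ in $L^2((-H,0),\rho_{\rm eq}N^2dz)$; since $({\bf f}_n)$ is a basis there, $U\equiv 0$, hence $u=U'\equiv 0$. The delicate point is justifying the integration by parts when $u$ is only $L^2$ (so that $U$ is only absolutely continuous): this is classical and relies on approximating $u$ by smooth functions with vanishing integral, or alternatively on noticing that $U\in H^1_0(-H,0)$ and that $\rho_{\rm eq}{\bf f}_n'\in W^{1,\infty}$ thanks to $\rho_{\rm eq}\in W^{1,\infty}$ and the regularity of ${\bf f}_n$ inherited from \eqref{SL}.
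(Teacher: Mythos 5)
Your proposal is correct and follows essentially the same route as the paper: the same integration-by-parts computations against \eqref{SL} for orthonormality, and the same completeness argument via the primitive $U(z)=\int_{-H}^z u$ (which vanishes at both endpoints thanks to orthogonality to ${\bf g}_0$) being orthogonal to every ${\bf f}_n$ in $L^2_{\rhoeq N^2}$. The only additions are the explicit isometry observation for points (i) and (iii), which the paper leaves implicit, and the remark on justifying the integration by parts for merely $L^2$ data; both are fine.
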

\begin{proof}
The first point is straightforward. \\
For the second point, remark first
that integrating by parts and using \eqref{SL}, one gets
\begin{align*}
\forall m,n\geq 1,\qquad ({\bf g}_m,{\bf
  g}_n)_{L^2_{\rhoeq}}&=c_mc_n(\rhoeq{\bf f}'_m, {\bf f}'_n)\\
&=-c_mc_n((\rhoeq{\bf f}'_m)', {\bf f}_n)\\
&=\frac{c_n}{c_m}({\bf f}_m,{\bf f}_n)_{L^2_{\rhoeq N^2}}\\
&=\delta_{mn},
\end{align*}
which proves that $({\bf g}_n)_{n\geq 1}$ is an orthonormal family for
$L^2((-H,0),\rhoeq dz)$. We
therefore need to prove that it forms an orthonormal basis if we
complement it with ${\bf g}_0$. The scalar $\alpha$ is chosen so that
${\bf g}_0$ is a unit vector. In order to check that it is orthogonal
to ${\bf g}_n$ for $n\geq 1$, just remark that
\begin{align*}
({\bf g}_0,{\bf g}_n) _{L^2_{\rhoeq}}&=\alpha c_n \int_{-H}^0{\bf
  f}'_n\\
&=0
\end{align*}
since ${\bf f}_n$ vanishes at the boundaries. The family $({\bf
  g}_n)_{n\geq 0}$ is therefore orthonormal and we just have to prove
that its orthogonal reduces to $\{0\}$. Let therefore
$f\in L^2((-H,0),\rhoeq dz)$
be such that
$$
\forall n\in \N,\qquad (f,{\bf g}_n)_{L^2_{\rhoeq}}=0
$$
and let us prove that $f=0$. From the case $n=0$, one deduces that
$\int_{-H}^0 f=0$. Denoting $F(z)=\int_{-H}^z f$, one also deduces
that  for all
$n\geq 1$ one has
\begin{align*}
0&= c_n (f,{\bf f}'_n)_{L^2_{\rhoeq}}\\
&=-c_n (F,(\rhoeq{\bf f}'_n)')\\
&=c_n^{-1}(F,{\bf f}_n)_{L^2_{\rhoeq N^2}},
\end{align*}
where we used the fact that $F$ vanishes at the boundaries (owing to
the fact that $f$ has zero mean) to derive
the second equality. Since $({\bf f}_n)_{n\geq 1}$ is an orthonormal
basis of $L^2((-H,0),\rho_{\rm
  eq}N^2 dz)$, we deduce that $F=0$ and therefore that $f=0$.\\
The third point is then directly deduced from the second one.
\end{proof}

Owing to the lemma, we  can  look for $V$, $P$ and $\rho$
under the following expansions
\begin{equation}\label{decomp}
\begin{array}{lcl}
V(t,X,z)&=&\dsp \sum_{n=0}^\infty V_n (t,X)
{\bf g}_n(z),\\
P(t,X,z)&=&\dsp \sum_{n=0}^\infty P_n(t,X)\rho_{\rm eq}(z){\bf g}_n(z),\\
\rho(t,X,z)&=&\dsp \sum_{n=1}^\infty \frac{1}{g}\rho_n
(t,X)\rho_{\rm eq}(z)N^2(z){\bf f}_n(z).
\end{array}
\end{equation}
\begin{remark}\label{remnormes}
Note in particular that
$$
\Abs{V}^2_{L^2_{\rhoeq}}=\sum_{n=0}^\infty \abs{V_n}_2^2,\quad
\Abs{w}^2_{L^2_{\rhoeq N^2 }}=\sum_{n=0}^\infty \abs{w_n}_2^2,\quad
\Abs{g\rho}^2_{L^2_{\frac{1}{\rhoeq N^2}}}=\sum_{n=0}^\infty \abs{\rho_n}_2^2;
$$
more generally, one has for all $s\in \R$,
$$
\Abs{V}^2_{H^{s,0}_{\rhoeq}}=\sum_{n=0}^\infty \abs{V_n}_{H^s}^2,\quad
\Abs{w}^2_{H^{s,0}_{\rhoeq N^2 }}=\sum_{n=0}^\infty \abs{w_n}_{H^s}^2,\quad
\Abs{g\rho}^2_{H^{s,0}_{\frac{1}{\rhoeq N^2}}}=\sum_{n=0}^\infty \abs{\rho_n}_{H^s}^2.
$$
\end{remark}

\subsection{The case of constant $N$: no dispersive mixing}\label{sectNconst}

The following proposition restates the system of equations
\eqref{Euler_pert_lin} as a set of equations on the coefficients of
these decompositions. We first assume that the Br\"unt-Vais\"al\"a frequency $N$ is constant.
\begin{proposition}\label{proplin}
Let $\rho_{\rm eq}\in W^{1,\infty}(-H,0)$ be such that $N^2=-\frac{\rho'_{\rm eq}}{\rho_{\rm eq}}$ is constant and strictly positive on $(-H,0)$.
Let also $s\geq 0$ and $T>0$ and $(U^{\rm lin}, \rho^{\rm lin})\in C([0,T];H^{s,0})$ solve the linear equations  \eqref{Euler_pert_lin}-\eqref{BC_pert_lin} and assume that the vertical component of the vorticity is zero, that is, $\nabla^\perp\cdot V^{\rm lin}_{\vert_{t=0}}=0$.\\
The coefficients $(V_n,w_n,\rho_n,P_n)$ provided by the
decompositions \eqref{decompw} and \eqref{decomp} are found
by solving, for all $n\geq 1$, the equations
$$
\begin{cases}
\dsp \big(1-\frac{c_n^2}{N^2}\Delta\big)\dt V_n+c_n \nabla\rho_n=0,\\
\dsp \dt \rho_n+c_n\nabla\cdot V_n=0,
\end{cases}
$$
with initial data given by the coefficients of the decomposition of and $V^0$ and $\rho^0$, and $w_n$ and $P_n$ are given by
$$
w_n=-c_n \nabla\cdot V_n, \quad \mbox{ and }\quad P_n=c_n \rho_n+\frac{c_n}{N^2}\dt w_n,
$$
while for $n=0$, one must have $P_0=0$ and $ V_0=0$. Moreover,
$$
\sum_{n=1}^\infty\big[ \vert \rho_n\vert_{H^s}^2+ \vert V_n\vert^2_{H^s}+ \vert w_n\vert_{H^{s}}^2 \big] <\infty.
$$
\end{proposition}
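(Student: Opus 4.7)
The plan is to substitute the expansions \eqref{decompw}–\eqref{decomp} into each of the four equations of \eqref{Euler_pert_lin}, and then project onto the appropriate orthonormal basis supplied by Lemma \ref{lemmebases}. Because the horizontal operators $\dt$ and $\nabla$ act only on the coefficients while the vertical structure sits in ${\bf f}_n$ or ${\bf g}_n$, each equation will become a scalar identity in the $z$-variable whose coefficients in the relevant basis give the sought relations.

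Concretely, I would treat the equations in this order. First, the horizontal momentum $\dt V+\frac{1}{\rho_{\rm eq}}\nabla P=0$: expanding $V$ on $({\bf g}_n)$ and noting that $P/\rho_{\rm eq}=\sum P_n{\bf g}_n$, orthonormality in $L^2_{\rhoeq}$ (Lemma \ref{lemmebases} ii) yields $\dt V_n+\nabla P_n=0$ for every $n\ge0$. Second, the density equation $\dt\rho+w\,\rho'_{\rm eq}=0$: writing $\rho'_{\rm eq}=-\rhoeq N^2/g$, both terms acquire the common weight $\rhoeq N^2/g$, and orthonormality of $({\bf f}_n)$ in $L^2_{\rhoeq N^2}$ gives $\dt\rho_n=w_n$. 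Third, the incompressibility $\nabla\cdot V+\dz w=0$: since $\dz w=\sum w_n{\bf f}_n'=\sum_{n\ge1}(w_n/c_n){\bf g}_n$ and ${\bf g}_0$ is $z$-dependent only, orthonormality in $L^2_{\rhoeq}$ yields $\nabla\cdot V_0=0$ and $w_n=-c_n\nabla\cdot V_n$ for $n\ge1$; combined with the previous step, this is the second equation of the system, $\dt\rho_n+c_n\nabla\cdot V_n=0$. Fourth, the vertical momentum: using the Sturm–Liouville equation \eqref{SL} to compute $\dz(\rhoeq{\bf g}_n)=c_n(\rhoeq{\bf f}_n')'=-\rhoeq N^2{\bf f}_n/c_n$ for $n\ge 1$, all three terms develop on the $({\bf f}_n)$ basis and projection gives $P_n=c_n\rho_n+(c_n/N^2)\dt w_n$.

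It then remains to derive the first equation of the system. Substituting $P_n$ into $\dt V_n=-\nabla P_n$ produces $\dt V_n+c_n\nabla\rho_n+(c_n/N^2)\nabla\dt w_n=0$, and using $w_n=-c_n\nabla\cdot V_n$ rewrites $\nabla\dt w_n=-c_n\nabla(\nabla\cdot \dt V_n)$. At this point I would invoke the hypothesis $\nabla^\perp\cdot V^{\rm lin}_{|t=0}=0$: each $\dt V_n=-\nabla P_n$ is a gradient, so this property is propagated in time and transfers to every coefficient, giving $\nabla(\nabla\cdot V_n)=\Delta V_n$. This turns the above into $(1-\tfrac{c_n^2}{N^2}\Delta)\dt V_n+c_n\nabla\rho_n=0$, as claimed.

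For the case $n=0$ the step that must be handled with care: the only equations involving $V_0,P_0$ are $\dt V_0+\nabla P_0=0$ and $\nabla\cdot V_0=0$, together with $\nabla^\perp\cdot V_0=0$ inherited from the initial condition. A curl- and divergence-free field in $L^2(\R^d)$ must vanish (it is a harmonic gradient with finite energy), hence $V_0\equiv 0$, and then $\nabla P_0=0$ forces $P_0\equiv0$ up to an inessential gauge. Finally, the series $\sum(|V_n|_{H^s}^2+|w_n|_{H^s}^2+|\rho_n|_{H^s}^2)$ is finite by the Parseval-type identities in Remark \ref{remnormes}, since the regularity $(U^{\rm lin},\rho^{\rm lin})\in C([0,T];H^{s,0})$ gives $V,w\in H^{s,0}_{\rhoeq}$ and $\rhoeq N^2$ and $(\rhoeq N^2)^{-1}$ are bounded on $(-H,0)$. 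The only genuine subtlety lies in the identity $\nabla(\nabla\cdot V_n)=\Delta V_n$ and the propagation of the irrotationality assumption; everything else is bookkeeping with the three orthonormal bases of Lemma \ref{lemmebases}.
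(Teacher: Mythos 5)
Your proof is correct and follows essentially the same route as the paper: expand on the bases of Lemma \ref{lemmebases}, use the Sturm--Liouville relation to convert the vertical derivatives $\dz w$ and $\dz P$ (this is precisely the content of the paper's Lemma \ref{lemder}), project to obtain the four modal relations \eqref{Montp0}--\eqref{Montp}, and eliminate $P_n$ and $w_n$ using the propagated irrotationality to write $\nabla(\nabla\cdot V_n)=\Delta V_n$. The paper leaves the final elimination and the $n=0$ case implicit ("the equations follow easily") while you spell them out, but there is no structural difference.
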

\begin{remark}
The assumption that $\nabla^\perp\cdot V^{\rm lin}_{\vert_{t=0}}=0$ is not restrictive. Indeed, one deduces from the first equation of \eqref{Euler_pert_lin} that $\nabla^\perp\cdot V^{\rm lin}$ is constant in time. We can therefore always decompose $V^{\rm in}$ as  $V^{\rm in}(t,X,z)=V_{\rm I}(X,z)+V_{\rm II}(t,X,z)$ with $\nabla\cdot V_{\rm I}=\nabla^\perp\cdot V_{\rm II}=0$ and apply the proposition to $V_{\rm II}(t,X,z)$.
\end{remark}
\begin{remark}
The system solved by the $(V_n,\rho_n)$ is a linear Boussinesq-type system similar to those arising in the study of shallow water waves (e.g. \cite{Lannes_book}). It is striking that for water waves, the dispersive perturbation appears as an approximation of a nonlocal operator (related to a Dirichlet-Neumann operator) which is valid only in shallow water. Here, the simple, differential, form of the dispersive term is valid outside the shallow water regime (see Section \ref{sectSW} for an investigation of this latter).
\end{remark}
\begin{proof}
The proof consists in decomposing the equations of \eqref{Euler_pert_lin} on several basis related to the Sturm-Liouville basis $({\bf f}_n)_{1\leq n}$. We have therefore to decompose time and space derivatives of $V$, $w$, $\rho$ and $P$ on such basis. This is straightforward for time and horizontal derivatives; for instance,
$$
\dt V =\sum_{n=0}^\infty (\dt V_n ) {\bf g}_n,\qquad \nabla P=\sum_{n=0}^\infty (\nabla P_n) \rho_{\rm eq} {\bf g}_n,
$$
but this is less obvious for vertical derivatives, which are considered in the following lemma.
\begin{lemma}\label{lemder}
Let $s\in \R$.\\
{\bf i.}  The following two assertions are equivalent,
\begin{enumerate}
\item  One has $w\in H^{s,1}(\cS)$ and $w_{\vert_{z=-H,0}}=0$.
\item One has $w=\sum_{n=1}^\infty w_n {\bf f}_n$ and $\sum_{n=1}^\infty \abs{w_n}_{H^s}^2+ \sum_{n=1}^\infty c_n^{-2}\abs{w_n}_{H^{s-1}}^2 <\infty$.
\end{enumerate}
Moreover, one has
$$
\dz w=\sum_{n=1}^\infty \frac{1}{c_n}w_n {\bf g}_n.
$$
{\bf ii.}  The following two assertions are equivalent,
\begin{enumerate}
\item  One has $\nabla P\in H^{s,0}$ and $\dz P\in H^{s+1,0}$.
\item One has $P=\sum_{n=0}^\infty P_n {\bf \rho}_{\rm eq}{\bf g}_n$ and $ \sum_{n=1}^\infty c_n^{-2}\abs{P_n}_{H^{s+1}}^2 <\infty$.
\end{enumerate}
Moreover, one has
$$
\dz P=-\sum_{n=1}^\infty \frac{1}{c_n}P_n \rho_{\rm eq} N^2 {\bf f}_n.
$$

\end{lemma}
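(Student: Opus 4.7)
My plan is to reduce everything to Parseval identities in the three orthonormal bases supplied by Lemma \ref{lemmebases}, combined with integration by parts and the Sturm-Liouville identity $(\rho_{\rm eq}{\bf f}_n')' = -\rho_{\rm eq}N^2 {\bf f}_n/c_n^2$ from \eqref{SL}. Together these two ingredients yield the explicit action of $\partial_z$ in passing between the three bases, and Parseval then converts $H^{s,k}$-norms on $\cS$ into $\ell^2$-sums of horizontal Sobolev norms of the modal coefficients; since $\rho_{\rm eq}$ and $N$ are bounded above and below, weighted and unweighted norms are equivalent and can be used interchangeably.

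For part (i), expanding $w(X,\cdot)=\sum_{n\ge 1}w_n(X){\bf f}_n$ in the $L^2_{\rho_{\rm eq}N^2}$-basis, Parseval yields $\Abs{w}_{H^{s,0}}^2\sim\sum_{n\ge 1}\abs{w_n}_{H^s}^2$. To compute $\partial_z w$ in the $L^2_{\rho_{\rm eq}}$-basis $({\bf g}_n)_{n\ge 0}$ I read off its coefficients: the $n=0$ coefficient is $\alpha(w(\cdot,0)-w(\cdot,-H))$, which vanishes exactly by the Dirichlet boundary condition; for $n\ge 1$, integration by parts (with vanishing boundary term because ${\bf f}_n(-H)={\bf f}_n(0)=0$) combined with the Sturm-Liouville relation yields
\[
c_n\int_{-H}^{0}\partial_z w\cdot\rho_{\rm eq}{\bf f}_n'\,dz = -c_n\int_{-H}^{0} w\,(\rho_{\rm eq}{\bf f}_n')'\,dz = \frac{1}{c_n}\int_{-H}^{0} w\,\rho_{\rm eq}N^2{\bf f}_n\,dz = \frac{w_n}{c_n}.
\]
Hence $\partial_z w=\sum_{n\ge 1}(w_n/c_n){\bf g}_n$, and Parseval again gives $\Abs{\partial_z w}_{H^{s-1,0}}^2\sim\sum_{n\ge 1} c_n^{-2}\abs{w_n}_{H^{s-1}}^2$; the two summability conditions in assertion~2 then match exactly the two pieces of the $H^{s,1}$-norm. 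The converse direction reads off from the same identities, with the vanishing trace automatic since each ${\bf f}_n$ vanishes at the endpoints.

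Part (ii) is analogous, based now on the ONB $(\rho_{\rm eq}{\bf g}_n)_{n\ge 0}$ of $L^2_{\rho_{\rm eq}^{-1}}$ provided by Lemma \ref{lemmebases}(iii). Writing $P=\sum_{n\ge 0}P_n\rho_{\rm eq}{\bf g}_n$, one has $\partial_z(\rho_{\rm eq}{\bf g}_0)=\partial_z\alpha=0$ and, for $n\ge 1$, $\partial_z(\rho_{\rm eq}{\bf g}_n)=c_n(\rho_{\rm eq}{\bf f}_n')'=-(1/c_n)\rho_{\rm eq}N^2{\bf f}_n$ by \eqref{SL}, producing term by term the claimed formula $\partial_z P=-\sum_{n\ge 1}(P_n/c_n)\rho_{\rm eq}N^2{\bf f}_n$. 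Parseval applied with the ONB $(\rho_{\rm eq}N^2{\bf f}_n)$ of $L^2_{(\rho_{\rm eq}N^2)^{-1}}$ (Lemma \ref{lemmebases}(i)) then gives $\Abs{\partial_z P}_{H^{s+1,0}}^2\sim\sum_{n\ge 1}c_n^{-2}\abs{P_n}_{H^{s+1}}^2$, which is precisely the summability appearing in the second assertion; since $c_n\le c_1$ it also controls $\sum_{n\ge 1}\abs{P_n}_{H^{s+1}}^2$, hence (modulo trivial handling of the $n=0$ mode) $\Abs{\nabla P}_{H^{s,0}}^2\sim\sum_{n\ge 0}\abs{\nabla P_n}_{H^s}^2$ via $\nabla P=\sum_{n\ge 0}(\nabla P_n)\rho_{\rm eq}{\bf g}_n$. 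The converse direction again reads off the same identities in reverse.

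The main obstacle is a mild logical interdependence between the derivative formulas and the norm equivalences: to differentiate the series term by term one needs the $\ell^2$-bounds, and to establish those one uses the formulas. I would resolve this by checking both derivative identities first on the dense subspace of finite linear combinations of basis elements---where they reduce to elementary applications of \eqref{SL} and the vanishing of ${\bf f}_n$ at the boundary---and then extending by density using the Parseval $\ell^2$-bounds. A secondary subtle point is the cancellation of the $n=0$ mode in $\partial_z w$ in part (i), which depends critically on the Dirichlet boundary condition $w_{\vert_{z=-H,0}}=0$; without it a nonzero ${\bf g}_0$-component would appear and the formula in assertion~1 would fail.
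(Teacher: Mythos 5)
Your proof is correct and follows essentially the same route as the paper: Parseval identities in the orthonormal bases of Lemma \ref{lemmebases}, combined with integration by parts and the Sturm-Liouville relation \eqref{SL} to identify the coefficients of $\dz w$ and $\dz P$ (the paper organizes part (ii) by relating the coefficients of $\nabla P$ and $\dz P$ through $\nabla \dz P$ rather than differentiating the expansion of $P$ term by term, but this is a cosmetic difference). The only point where you are too quick is the converse of part (i): the vanishing of the trace of $w$ is not ``automatic'' from the vanishing of each ${\bf f}_n$ at the endpoints; the paper justifies it via the uniform boundedness of the eigenfunctions (hence absolute convergence of the series), though your density argument also closes this once one invokes the continuity of the trace map on $H^{s,1}$.
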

\begin{proof}[Proof of the lemma]
{\bf i.} Let us first prove the direct implication. By assumption, $\dz w\in L^2_z H^{s-1}(\R^d)$ and can therefore be decomposed on the basis $({\bf g}_n)_{0\leq n}$, which, we recall, is orthonormal for the $L^2_{\rho_{\rm eq}}$ scalar product.
$$
\dz w = \sum_{n=0}^\infty (\dz w, {\bf g}_n)_{L^2_{\rho_{\rm eq}}} {\bf g}_n.
$$
Since $w$ vanishes at the boundaries, we have, 
\begin{align*}
(\dz w, {\bf g}_n)_{L^2_{\rho_{\rm eq}}} &=-\int_{-H}^0 w c_n \frac{d}{dz}\big( \rho_{\rm eq}\frac{d}{dz} {\bf f}_n \big)\\
&=\frac{1}{c_n}w_n,
\end{align*}
using the definition of ${\bf f}_n$ and ${\bf g}_n$ (when $n=0$ the last expression should be replaced by $0$). The result follows therefore from Parseval identity.\\
For the reverse inequality, Parseval identity directly yields $w\in H^{s,1}(\cS)$, but it remains to show that the trace vanishes at the boundary. This is the case because $w$ is an infinite sum of functions that vanish at the boundary and that, under our assumptions, and recalling that the eigenfunctions are uniformly bounded \cite{Fulton}, this sum is absolutely convergent.\\
{\bf ii.} Since $\dz P \in H^{s+1,0}$ and $\nabla P\in H^{s,0}$, there are functions $q_n\in H^{s+1}(\R^d)$ and $Q_n\in H^s(\R^d)^d$ such that
$$
\dz P=\sum_{n=1}^\infty q_n \rho_{\rm eq} N^2 {\bf f}_n \quad\mbox{ and }\quad \nabla P= \sum_{n=0}^\infty Q_n \rho_{\rm eq} {\bf g}_n.
$$
Since $\nabla\dz P \in H^{s,0}$, we can decompose it  on the basis $(\rho_{\rm eq}N^2 {\bf f}_n)$ which is orthonormal for the $L^2((-H,0),(\rho_{\rm eq}N^2)^{-1}{\rm d}z)$ scalar product, and we have
$$
(\dz \nabla P, \rho_{\rm eq}N^2 {\bf f}_n)_{L^2_{(\rho_{\rm eq}N^2)^{-1}}}=-(\nabla P,\frac{1}{c_n}{\bf g}_n)_{L^2}=-\frac{1}{c_n}Q_n
$$
(note that contrary to the first point, it is not required that $P$ vanishes at the boundary, because the eigenfunctions ${\bf f}_n$ satisfy such a cancellation). This yields that $Q_n=-c_n\nabla q_n$ and the result follows by setting $P_n=-c_n q_n$.
\end{proof}
Using Lemmas \ref{lemmebases} and \ref{lemder}, the system of  equations
\eqref{Euler_pert_lin}  is equivalent to the relations obtained by
taking the $L^2((-H,0),dz)$ scalar product of these equations with
$\rhoeq{\bf g}_n$ ($n\geq0$), $\rhoeq  {\bf f}_n$ ($n\geq 1$),
${\bf f}_n$ ($n\geq 1$) and $ \rhoeq {\bf g}_n$ ($n\geq
0$) respectively. These relations are
\begin{equation}
\label{Montp0}
\left\lbrace
\begin{array}{l}
\dsp \dt V_0+\nabla P_0=0,\\
\dsp \nabla\cdot V_0=0
\end{array}\right.
\end{equation}
and
\begin{equation}
\label{Montp}
\forall n\geq 1,\qquad \left\lbrace
\begin{array}{l}
\dsp \dt V_n+\nabla P_n=0 , \\
\dsp \frac{1}{N^2} \dt w_n+\rho_n-\frac{P_n}{c_n}=0,\\
\dsp \dt \rho_n- w_n=0,\\
\dsp  \frac{1}{c_n}w_n =-\nabla\cdot V_n,
\end{array}\right.
\end{equation}
where the assumption that $N$ is independent of $z$ has been used to
derive the second equation in \eqref{Montp}. The equations on the coefficients given in the proposition follow easily. Finally, the convergence of the summation given in the last point of the proposition is a consequence of Remark \ref{remnormes}.
\end{proof}
\begin{remark}\label{remmix}
The second equation in \eqref{Montp} is actually
$$
 \frac{P_n}{c_n}-\rho_n=(\dt w,{\bf f}_n)_{L^2_{\rho_{\rm eq}}},
$$
and mixes all the modes if $N^2$ is not constant. Since we assumed that $N^2$ is constant in the statement of the proposition, we have $(\dt w,{\bf
  f}_n)_{L^2_{\rho_{\rm eq}}}=\frac{1}{N^2} \dt w_n$. The general case is considered in Proposition \ref{proplinbis} below.
\end{remark}
\begin{remark}
One directly deduces from \eqref{Montp} that
$$
\frac{d}{dt}\big( \abs{V_n}_2^2+\frac{c_n^2}{N^2}\abs{\nabla\cdot
  V_n}_2^2+\abs{\rho_n}_2^2\big)=
\frac{d}{dt}\big( \abs{V_n}_2^2+\frac{1}{N^2}\abs{w_n}_2^2+\abs{\rho_n}_2^2\big) =
0.
$$
Owing to Remark \ref{remnormes}, we get after summing over $n\in \N$,
$$
\frac{d}{dt}\int_{\R^d}\Big(\frac{1}{2} \rhoeq\abs{U}^2+
\frac{1}{2}\rho^2
\frac{g^2}{N^2\rhoeq}\Big)=0,
$$
which is the energy identity satisfied by the solution to the linear equations  \eqref{Euler_pert_lin}-\eqref{BC_pert_lin}.
\end{remark}
\begin{remark}\label{remIHP}
In the physical literature, the dynamics of the small amplitude
adjustments is generally described in terms of the vertical velocity
$w$ through \eqref{eqbig}, or equivalently, after decomposing on the
basis $({\bf f}_n)_{n\geq 1}$, by
$$
\forall n\in \N^*, \qquad \dt ^2 w_n-c_n^2 (1-\frac{c_n^2}{N^2}\Delta)^{-1}\Delta w_n=0.
$$
This equation can of course be easily deduced from \eqref{Montp}. This configuration will be considered in Section \ref{sectSW} below where the more complex nonlinear case is considered.
\end{remark}

The representation of the solution to the linear equations \eqref{Euler_pert_lin}-\eqref{BC_pert_lin} based on the modal decomposition  \eqref{decompw} and \eqref{decomp} brings some interesting qualitative insight on the behavior of the waves. It should be used with caution however; indeed, these decompositions involve infinite sums that without further assumptions converge slowly, and in general no better than in the $L^2$-sense. For instance, the modal decomposition for the density is 
$$
\rho(t,X,z)=\sum_{n=1}^\infty \frac{1}{g} \rho_n(t,X)\rho_{\rm eq}(z)N^2{\bf f}_n(z);
$$
all the terms in the summation vanish at the boundary so that, when $\rho$ does not vanish at the boundary, this summation \emph{cannot} converge uniformly in, say, $L^\infty([-H,0]; H^s(\R^d))$. Representing numerically the solution by a finite sum of modes with coefficients computed as in Proposition \ref{proplin} is therefore inaccurate. This is due to the lack of regularity with respect to $z$ of the solution considered in Proposition \ref{proplin}. Indeed, as it appears in the Sturm-Liouville problem \eqref{SL}, differentiation with respect to $z$ is roughly equivalent to the multiplication of the coefficients of the modal expansion by a factor of size $O(1/c_n)=O(n)$. If we consider a function $f \in H^m(-H,0)$, represented  under the form
$$
F=\sum_{n=1}^\infty F_n {\bf f}_n,
$$
it is therefore expected that $\sum  n^{2m}\vert F_n\vert^2 <\infty$. As seen above, this is however false without additional assumptions on the behavior of $F$ at the boundaries because, one cannot identify $\dz^m F$ with $\sum_{n=1}^\infty F_n \frac{d^m}{dz^m}{\bf f}_n$.\\
The following proposition provides such additional conditions on the initial data under which the modal decomposition is more strongly (and in particular uniformly) convergent. To state these conditions, we
need to define the second order differential operators $T_1$, $T_2$  as
$$
T_1=\dz\Big( \frac{1}{-\rho'_{\rm eq}}\dz \big( \rho_{\rm eq}\cdot \big) \Big) \quad \mbox{ and } \quad 
T_2=\dz\Big(\rho_{\rm eq} \dz \big( \frac{1}{-\rho'_{\rm eq}} \cdot \big) \Big);
$$
in the statement below, the condition on $V^0$ must be removed in the case $l=0$ and we denote with a star the adjoint operator with the standard $L^2((-H,0),{\rm d}z)$ scalar product.
\begin{proposition}\label{proplinstrong}
Under the assumptions of Proposition \ref{proplin}, assume moreover that for some $k\in \N^*$ and $\nu=2k$ or $2k+1$, one has $\rho_{\rm eq}\in W^{\nu,\infty}$ and that the solution $(U^{\rm lin},\rho^{\rm lin})$ to the linear equation belongs to $C([0,T], H^{s,\nu}(\cS))$, and also that its initial data $(U^0,\rho^0)$ satisfy the additional conditions
$$
\dz (T_1)^{l-1}V^0=0,\qquad \big(T_2^*\big)^l w^0=0,\quad (T_2)^{l}\rho^0 =0,
$$
for all $0\leq l\leq k$. Then the following convergence holds
$$
\sum_{n=1}^\infty\big[ \frac{1}{c_n^{2\nu}}\vert \rho_n\vert_{H^{s-\nu}}^2+ \frac{1}{c_n^{2\nu}}\vert V_n\vert^2_{H^{s-\nu}}+\frac{1}{c_n^{2\nu}} \vert w_n\vert_{H^{s-\nu}}^2 \big] <\infty.
$$
\end{proposition}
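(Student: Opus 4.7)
The approach is to observe that, for each of the three modal bases appearing in \eqref{decompw}--\eqref{decomp}, a specific second-order differential operator in $z$ acts diagonally on it with eigenvalue $-g/c_n^2$. A direct computation using the Sturm-Liouville equation \eqref{SL} yields
\begin{equation*}
T_1\,{\bf g}_n = -\frac{g}{c_n^2}\,{\bf g}_n,\quad T_2^*\,{\bf f}_n = -\frac{g}{c_n^2}\,{\bf f}_n,\quad T_2(\rho_{\rm eq}N^2{\bf f}_n) = -\frac{g}{c_n^2}\,\rho_{\rm eq}N^2{\bf f}_n,
\end{equation*}
so that each application of one of these operators to $V$, $w$, or $\rho$ introduces an extra factor $1/c_n^2$ on every modal coefficient. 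Iterating $k$ times produces the $c_n^{-4k}$ weight needed for the even case $\nu=2k$; the odd case $\nu=2k+1$ is then reached by one further application of $\dz$, which via Lemma~\ref{lemder} yields an additional factor $1/c_n$ together with a change of basis from $({\bf f}_n)$ to $({\bf g}_n)$ (or conversely).

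To translate this modal decay into the claimed summability, I would check via Parseval (Remark~\ref{remnormes}) that the iterates $(T_1)^k V$, $(T_2^*)^k w$, $(T_2)^k \rho$ are genuinely expanded in their respective bases with coefficients $(-g/c_n^2)^k$ times the original ones. This reduces to the self-adjointness of $T_1$ in $L^2_{\rho_{\rm eq}}$ (up to boundary terms) and to the duality between $T_2$ and $T_2^*$ in $L^2((-H,0),\mathrm{d}z)$, both established by integrating by parts twice. The compatibility conditions at $z=-H,0$ imposed on the initial data, namely $\dz(T_1)^{l-1}V^0=0$, $(T_2^*)^l w^0=0$, $(T_2)^l \rho^0=0$ for $l=1,\ldots,k$ (and the vanishing of $w^0$ and $\rho^0$ at the boundary for $l=0$), are precisely what is needed to kill the boundary contributions produced at each iteration. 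The asymmetric form of the condition on $V^0$ reflects the fact that $({\bf g}_n)$ does not vanish at the boundary whereas $({\bf f}_n)$ does.

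Once the modal coefficients of the iterates are identified, the remaining step is simply to read off the appropriate norm. Since $(U^{\rm lin},\rho^{\rm lin})\in C([0,T], H^{s,\nu}(\cS))$ and $\rho_{\rm eq}\in W^{\nu,\infty}$, the operators $(T_1)^k$, $(T_2^*)^k$, $(T_2)^k$ (each of order $2k$ in $z$, with $W^{2k,\infty}$ coefficients) map continuously $H^{s,\nu}(\cS)$ into $H^{s-2k,\nu-2k}(\cS)$. For $\nu=2k$ even, the iterates lie in $H^{s-2k,0}(\cS)$ and Parseval yields $\sum_n c_n^{-4k}|V_n|_{H^{s-2k}}^2<\infty$, with analogous bounds for $w_n$ and $\rho_n$. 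For $\nu=2k+1$ odd, the iterates lie in $H^{s-2k,1}(\cS)$; applying one further $\dz$ converts them into expansions in the complementary basis with an extra $1/c_n$ weight (via Lemma~\ref{lemder}), and Parseval then delivers $\sum_n c_n^{-(4k+2)}|\cdot|_{H^{s-2k-1}}^2<\infty$, which is exactly the announced bound $\sum_n c_n^{-2\nu}|\cdot|_{H^{s-\nu}}^2<\infty$.

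The main obstacle, beyond the careful bookkeeping involved in the successive integrations by parts, is that the conclusion must hold uniformly in $t\in[0,T]$ while the compatibility conditions have been prescribed only on the initial data. This is handled by the mode-by-mode structure of the linear dynamics established in Proposition~\ref{proplin}: each triple $(V_n,w_n,\rho_n)$ evolves through a closed, $n$-independent Boussinesq-type system whose $H^\sigma(\R^d)$ energy is conserved (cf.\ the energy identity of Remark~\ref{remmix}), so the $n$-weighted summability of the modal coefficients at time $t=0$ is simply propagated verbatim to the whole time interval $[0,T]$.
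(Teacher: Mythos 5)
Your proposal is correct and rests on the same computational core as the paper's proof: the operators $T_1$, $T_2^*$, $T_2$ act (modulo boundary terms) as multiplication by $-g/c_n^2$ on the bases $({\bf g}_n)$, $({\bf f}_n)$, $(\rho_{\rm eq}N^2{\bf f}_n)$ respectively, the compatibility conditions kill the boundary terms produced by the integrations by parts, and Parseval converts the $H^{s,\nu}$ regularity of the iterates into the weighted summability, with Lemma \ref{lemder} supplying the extra factor $1/c_n$ in the odd case. Where you genuinely depart from the paper is in the organization of the iteration and in the treatment of the time variable. The paper's pivot is Lemma \ref{keylemma}: when $N$ is constant, $(T_1V,\,T_2^*w,\,T_2\rho,\,T_1^*P)$ solves the \emph{same} linear system, so the boundary conditions are propagated by the evolution and the whole statement reduces to the base cases $\nu=1,2$. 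You instead iterate the diagonal modal action directly and propagate the conclusion from $t=0$ to $[0,T]$ via conservation of the per-mode $H^\sigma$ energy of the decoupled Boussinesq systems of Proposition \ref{proplin}. Both routes use the constancy of $N$ essentially (the paper through the identity $T_3=T_2^*$, you through the decoupling of the modes), and your energy argument is an economical way to handle the time dependence, at the price of not exhibiting the structural invariance that the paper's lemma makes explicit and reuses elsewhere. One point to tighten, which you share with the paper's own formulation: when pairing $T_1V$ with ${\bf g}_n$ in $L^2_{\rho_{\rm eq}}$, the surviving boundary term is $\big[\rho_{\rm eq}{\bf g}_n\,\tfrac{1}{-\rho'_{\rm eq}}\dz(\rho_{\rm eq}V)\big]_{-H}^{0}$ (since ${\bf g}_n$ does not vanish at the boundary), so the cancellation really requires the vanishing of $\dz(\rho_{\rm eq}(T_1)^{l-1}V)$ rather than of $\dz(T_1)^{l-1}V$ alone; this affects neither the mechanism nor the conclusion, but it is worth recording when you carry out the "careful bookkeeping" you allude to.
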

\begin{proof}
The following key lemma shows the importance of the differential operators $T_1$ and $T_2$ introduced above.
\begin{lemma}\label{keylemma}
Assume that $N^2=g\frac{-\rho'_{\rm eq}}{\rho_{\rm eq}}$ does not depend on $z$ and that $(U,\rho)$ be a smooth enough solution of the linear system
\begin{equation}\label{lemlineq}
\begin{cases}
\dt V + \frac{1}{\rho_{\rm eq}}\nabla   P&= 0\\
\dt w + \frac{1}{\rho_{\rm eq}}(\dz P+\rho g \big)&= 0\\
\dt  \rho +\rho'_{\rm eq}  w&=0,\\
\nabla \cdot V+\dz w&=0.
\end{cases}
\end{equation}
Defining $V^\sharp$, $w^\sharp$, $\rho^\sharp$ and $P^\sharp$ through
$$
V^\sharp=T_1 V, \qquad w^\sharp=T_2^* w, \qquad \rho^\sharp=T_2 \rho, \qquad P^\sharp= T_1^* P,
$$
one has
$$
\begin{cases}
\dt V^\sharp + \frac{1}{\rho_{\rm eq}}\nabla   P^\sharp&= 0\\
\dt w^\sharp + \frac{1}{\rho_{\rm eq}}(\dz P^\sharp+\rho^\sharp g \big)&= 0\\
\dt  \rho^\sharp +\rho'_{\rm eq}  w^\sharp&=0,\\
\nabla \cdot V^\sharp+\dz w^\sharp&=0.
\end{cases}
$$
\end{lemma}
\begin{proof}[Proof of the lemma]
The first, third and fourth equations follow from simple computations by applying $T_1$ to the first equation of \eqref{lemlineq}, $T_2$ to the third one and $T_1$ to the fourth one. Let us now define
$$
T_3=\frac{1}{\rho_{\rm eq}}\Big( \rho_{\rm eq}\dz \big( \frac{\rho_{\rm eq}}{-\rho'_{\rm eq}}\cdot \big) \Big);
$$
applying $T_3$ to the second equation in \eqref{lemlineq}, one gets
$$
T_3\dt  w + \frac{1}{\rho_{\rm eq}}(\dz P^\sharp+\rho^\sharp g \big)= 0\\
$$
and the result follows from the observation that if $N^2$ does not depend on $z$, then $T_3=T_2^*$.
\end{proof}
It is then quite easy to check that the conditions made in the statement of the proposition on the initial data are propagated by the equations. And since the system satisfied by $(U^\sharp,\rho^\sharp, P^\sharp)$ has exactly the same structure as the original one, it is enough to prove that result for $\nu=1$ and $\nu=2$.  The case $\nu=1$ follows from Lemma \ref{lemder}  for $w$ and $P$. For $V$, the results is obtained as for $P$; finally, the assumption that $\rho$ vanishes at the boundaries is propagated from the initial condition, and the result for $\rho$ can be obtained as for $w$.\\
We now prove the case $\nu=2$. Since under the assumptions of the proposition one has $T_2^* w \in L^2([-H,0];H^{s-2}(\R^d))$, we can represent it on the basis ${\bf f}_n$ with coefficients
$$
\big( T_2^* w, {\bf f}_n \big)_{L^2_{N^2 \rho_eq}}=-\frac{1}{c_n}\big( \dz w, {\bf g}_n \big)_{L^2_{\rho_{\rm eq}}}=-\frac{1}{c_n^2} w_n,
$$
the last equality stemming from Lemma \ref{lemder}. Since moreover $w_n=-c_n\nabla \cdot V_n$ (and that $\nabla^\perp \cdot V_n=0)$, we then deduce that $\sum_{n=1}^\infty \frac{1}{c_n^{4-2}}\abs{ V_n }_{H^{s-2k+1}} <\infty$. \\
Since $\rho$ is now assumed to vanish at the boundary, there is a representation formula for $\dz \rho$ similar to the one provided for $w$ in Lemma \ref{lemder} and one can conclude as above. For $V$, there is a representation formula similar to the one given for $P$ in Lemma \ref{lemder} and the fact that $\dz V$ vanishes at the boundary provides a representation formula for $T_1 V$ that provides the result through Parseval identity.
\end{proof}

\subsection{The case of variable $N$ and dispersive mixing}\label{sectNpasconst}

We have already noted in Remark \ref{remmix} that it is necessary  that $N^2$ be constant in the derivation of the equations given in Proposition \ref{proplin} for the coefficients of the modal decomposition. The key point was that when  $N^2$ is constant then one can write
\begin{align*}
 ({\bf f}_m,{\bf f}_n)_{L^2_{\rho_{\rm eq}}}&=\frac{1}{N^2} ({\bf
   f}_m,{\bf f}_n)_{L^2_{\rho_{\rm eq}N^2}},\\
&=\frac{1}{N^2}\delta_{mn}
\end{align*}
where $\delta_{mn}=1$ if $m=n$ and $0$ otherwise.
In the general case where
$N^2$ depends on $z$, this is no longer the case and we are therefore led to define the interaction coefficients $\alpha_{mn}$ as
\begin{equation}\label{interaction}
\alpha_{mn}=(\bff_m,\bff_n)_{L^2_{\rho_{\rm eq}}},
\end{equation}
and the second equation in \eqref{Montp} becomes
\begin{align*}
 \frac{P_n}{c_n}-\rho_n&=(\dt w,{\bf f}_n)_{L^2_{\rho_{\rm eq}}},\\
 &=\sum_{m=1}^\infty  \alpha_{mn}\dt w_m\\
 &=-\sum_{m=1}^\infty c_m \alpha_{mn}\nabla\cdot \dt V_m;
\end{align*}
we therefore have the following generalization of Proposition \ref{proplin}.
\begin{proposition}\label{proplinbis}
Let $\rho_{\rm eq}\in W^{1,\infty}(-H,0)$ be such that $N^2=-\frac{\rho'_{\rm eq}}{\rho_{\rm eq}}$ is a strictly positive function on $(-H,0)$.
Let also $s\geq 0$ and $T>0$ and $(U^{\rm lin}, \rho^{\rm lin})\in C([0,T];H^{s,0})$ solve the linear equations  \eqref{Euler_pert_lin}-\eqref{BC_pert_lin} and assume that the vertical component of the vorticity is zero, that is, $\nabla^\perp\cdot V^{\rm lin}=0$.\\
The coefficients $(V_n,w_n,\rho_n,P_n)$ provided by the
decompositions \eqref{decompw} and \eqref{decomp} are found
by solving, for all $n\geq 1$, the equations
$$
\begin{cases}
\dt V_n- \dsp \sum_{m=1}^\infty c_mc_n \alpha_{mn}\Delta\dt V_m+c_n \nabla\rho_n=0,\\
\dsp \dt \rho_n+c_n\nabla\cdot V_n=0
\end{cases}
$$
with initial data given by the coefficients of the decomposition of and $V^0$ and $\rho^0$, and $w_n$ and $P_n$ are given by
$$
w_n=-c_n \nabla\cdot V_n, \quad \mbox{ and }\quad P_n=c_n \rho_n+ c_n \sum_{m=1}^\infty \alpha_{mn} \dt w_m ,
$$
while for $n=0$, one must have $P_0=0$ and $ V_0=0$. 
\end{proposition}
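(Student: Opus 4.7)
The plan is to follow the same scheme as the proof of Proposition \ref{proplin}, tracking only the modification brought by the fact that $N^2$ is no longer constant. I first expand $V$, $w$, $\rho$, $P$ according to the modal decompositions \eqref{decompw} and \eqref{decomp}, and use Lemma \ref{lemder} to obtain the expansions of the vertical derivatives $\dz w$ and $\dz P$ on the appropriate weighted bases. Then, exactly as in the constant $N^2$ case, I project each of the four equations of \eqref{Euler_pert_lin} onto a suitable basis element: taking the $L^2((-H,0),dz)$ scalar product of the horizontal momentum equation with $\rhoeq{\bf g}_n$ yields $\dt V_n+\nabla P_n=0$; the incompressibility equation against ${\bf f}_n$ gives $w_n=-c_n\nabla\cdot V_n$; and the density equation against ${\bf f}_n$ (with the weight $\rho_{\rm eq}N^2$) gives $\dt \rho_n=w_n$, i.e. $\dt\rho_n+c_n\nabla\cdot V_n=0$.

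The only equation for which the non-constancy of $N^2$ plays a role is the vertical momentum equation. Testing it against $\rhoeq{\bf f}_n$ gives
\begin{equation*}
\frac{P_n}{c_n}-\rho_n=(\dt w,{\bf f}_n)_{L^2_{\rho_{\rm eq}}}.
\end{equation*}
In the constant $N^2$ case this right-hand side collapses to $\frac{1}{N^2}\dt w_n$ because $({\bf f}_m,{\bf f}_n)_{L^2_{\rho_{\rm eq}}}=\frac{1}{N^2}\delta_{mn}$; here this fails, and one must instead expand $\dt w=\sum_m (\dt w_m){\bf f}_m$ inside the scalar product and bring out the interaction coefficients $\alpha_{mn}$ defined in \eqref{interaction}, so that
\begin{equation*}
\frac{P_n}{c_n}-\rho_n=\sum_{m=1}^\infty \alpha_{mn}\dt w_m=-\sum_{m=1}^\infty c_m\alpha_{mn}\nabla\cdot \dt V_m,
\end{equation*}
where in the last step I use the relation $w_m=-c_m\nabla\cdot V_m$ already obtained. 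This immediately gives the formula for $P_n$ stated in the proposition, and substituting this expression for $P_n$ into the horizontal momentum equation $\dt V_n+\nabla P_n=0$ yields the coupled system
\begin{equation*}
\dt V_n-\sum_{m=1}^\infty c_mc_n\alpha_{mn}\Delta \dt V_m+c_n\nabla\rho_n=0,
\end{equation*}
together with the transport equation $\dt\rho_n+c_n\nabla\cdot V_n=0$ already derived. The cases $n=0$ are handled as in Proposition \ref{proplin}: the zero-mode of the density expansion is absent because the $\rho$-decomposition starts at $n=1$, and the assumption $\nabla^\perp\cdot V^{\rm lin}=0$ together with $\nabla\cdot V_0=0$ (stemming from the vanishing of $w$ at the boundaries and the incompressibility equation projected on ${\bf g}_0$) force $V_0=0$ and then $P_0=0$.

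The main obstacle I expect is not an algebraic one — the projections are the same as in Proposition \ref{proplin} — but rather to justify interchanging the infinite sum with the scalar product when computing $(\dt w,{\bf f}_n)_{L^2_{\rho_{\rm eq}}}$ with $N^2$ variable. This is guaranteed by the fact that the decomposition of $w$ converges in $L^2_{\rho_{\rm eq}N^2}$ (hence in $L^2_{\rho_{\rm eq}}$ under the standing hypothesis $\inf N>0$, $\inf \rho_{\rm eq}>0$), so that the sum $\sum_m \alpha_{mn}\dt w_m$ is well defined and term-by-term projection is legitimate. Everything else follows exactly as in the constant-$N$ case, with Remark \ref{remnormes} ensuring that the modal coefficients have the integrability claimed.
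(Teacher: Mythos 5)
Your proposal is correct and follows essentially the same route as the paper: the only change relative to Proposition \ref{proplin} is in the projection of the vertical momentum equation, where $(\dt w,{\bf f}_n)_{L^2_{\rho_{\rm eq}}}$ no longer diagonalizes and instead produces the interaction coefficients $\alpha_{mn}$, after which $w_m=-c_m\nabla\cdot V_m$ and irrotationality turn $\nabla\nabla\cdot\dt V_m$ into $\Delta\dt V_m$, exactly as in the discussion preceding the proposition. (One minor slip: the divergence-free condition should be projected against $\rho_{\rm eq}{\bf g}_n$ rather than ${\bf f}_n$, since $\nabla\cdot V$ and $\dz w$ are both expanded on the $({\bf g}_n)$ basis; this does not affect your conclusion.)
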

\begin{remark}
As in Proposition \ref{proplin}, one gets that the sequences $(\abs{V_n}_{H^s})_n$ and $(\abs{\rho_n}_{H^s})_n$ are in $l^2(\N)$, but it is not possible to improve this convergence as in Proposition \ref{proplinstrong}. Indeed, the proof of this proposition relied on Lemma \ref{keylemma} which requires that $N^2$ is constant. The best one can get is to extend to non constant $N$ the case $\nu=1$ of Proposition \ref{proplinstrong}; this ensures that if $(U^{\rm lin}, \rho^{\rm lin})\in C([0,T];H^{s,1})$,  then one has convergence  of $(c_n^{-1}\abs{V_n}_{H^{s-1}})_n$ and $(c_n^{-1}\abs{\rho_n}_{H^{s-1}})_n$ in $l^2(\N)$. This is fortunately enough to get uniform convergence of the modal decomposition.
\end{remark}
\begin{remark}
Though mode mixing due to topography \cite{griffiths} or nonlinear terms \cite{martin} is known to occur, such a dispersive mixing does not seem to have been noticed before; the reason is that in the physical literature (e.g; \cite{Flierl,Gill}), only the hydrostatic component of the pressure is taken into account (i.e. the term $\dt w$ is neglected in the second equation of \eqref{Euler_pert_lin}).
\end{remark}
\begin{example}\label{ex2}
We represent in Table \ref{table1} the mixing coefficients $\alpha_{mn}$ for $1\leq m,n \leq 6$ for the example of stratified ocean considered in Example \ref{ex1} (the table being symmetric, we just represent the upper half). For this example, the mixing coefficients between two neighboring coefficients ($n$ and $n\pm 1$) are smaller but of same order as the corresponding diagonal terms, while the other interactions are at least one order smaller. It follows that for perturbations for which dispersion is significant (i.e. perturbations that do not have a too large wavelength), it can trigger neighboring modes. 
\begin{table}
\caption{Mixing coefficients $\alpha_{mn}$ for $1\leq m,n\leq 6$ (in ${\rm hour}^{-2}$)}
\begin{center}
\begin{tabular}{|c|c|c|c|c|c|}
0.26& -0.22& -0.02 &0.07 & -0.05 &-0.01 \\
 * & 0.57 & -0.13 & 0.01 &-0.05 &0.02  \\
 * & * & 0.45 & -0.20 & -0.03 & 0.08 \\
 * & * &* & 0.47 & 0.14 & 0.06 \\
 * & * &* &* & 0.45 & 0.17 \\
  * & * &* &* & *  & 0.38
\end{tabular}
\end{center}
\label{table1}
\end{table}%

\end{example}

\section{The shallow water regime}\label{sectSW}

We consider here some configurations of interest in oceanography, when the horizontal scale is much larger than the depth (shallow water). Under the additional {\it strong Boussinesq assumption} under which the density is assumed to be constant in Euler's equations, we are able to derive nonlinear models. We first derive in \S \ref{sectND} the dimensionless equations, that involve two parameters: $\eps$ (nonlinearity) and $\mu$ (shallowness). The local well posedness of these equations is granted by a straightforward adaptation of Theorem \ref{theorem1}, but the resulting time of existence cannot be controlled as $\mu$ gets smaller, which is by definition the case in the shallow water regime. This problem is addressed in \S \ref{sectunif}. We then propose in \S \ref{sectmodeNL} a nonlinear extension of the modal representation of the solutions used above in the linear case.
\subsection{Dimensionless equations under the strong Boussinesq assumption}\label{sectND}

We are interested here in describing the behavior of the solutions to (\ref{Euler_pert})-(\ref{BC_pert}) for configurations arising in oceanography with the propagation of internal waves. For such applications, the equilibrium density is of the form
$$
\rho_{\rm eq}(z)=\rho_0+\tilde\rho_{\rm eq}(z)
$$
with $\rho_0$ a constant and $\tilde\rho_{\rm eq}\ll \rho_0$ . Therefore, the so called {\it strong Boussinesq assumption} is generally made; it consists in neglecting the variation of the density everywhere except in the density equation and in buoyancy forces, so that the equations under considerations are
\begin{equation}
\label{Euler_pert_B}
\left\lbrace
\begin{array}{l}
\dsp (\dt U+\eps U\cdot\grad U)=-\frac{1}{\rho_{0}}\grad P-\frac{\rho}{\rho_{0}} g{\bf e_z},\\
\dsp \dt \rho+\eps U\cdot\grad \rho+w\frac{d}{dz}\rho_{\rm eq}=0,\\
\dsp \nabla_{X,z}\cdot U=0
\end{array}\right.
\end{equation}
with the boundary conditions
\begin{equation}\label{BC_pert_B}
w_{\vert_{z=-H}}=w_{\vert_{z=0}}=0.
\end{equation}

In most cases, the configurations under consideration are typically of ``shallow water'' type, meaning that horizontal scales are much larger than vertical ones. (see for instance \cite{Gill,Chelton,griffiths}). This particular setting has strong consequences on the behavior of the solutions that are more easily captured by working with a dimensionless version of the equations for which the scales are adapted to the physical phenomenon under consideration.

We therefore define the following dimensionless variables and unknown (denoted with a tilde),
$$
\begin{array}{l}
\dsp X=L\widetilde{X},\quad z=H\widetilde{z},\quad t=\frac{L}{\sqrt{gH}}\widetilde{t},\vspace{1mm}\\
\dsp V=\sqrt{gH} \widetilde{V},\quad w=\sqrt{\mu}\sqrt{gH}\widetilde{w},\quad \rho=\rho_0 \widetilde{\rho},
\quad P=\rho_0 g H \widetilde{P},
\end{array}
$$
where $L$ is a typical horizontal scale, $H$ is the total depth, $\rho_0$ the average volumic mass of sea-water, while $\sqrt{gH}$ correspond to the 
speed of the barotropic (surface) 
mode neglected here under our rigid lid assumption, 
and  $\mu>0$ is the so called ``shallowness parameter'',
$$
\mu=\frac{H^2}{L^2}.
$$
We also introduce the quantities $\rhob$ and $N^2$ defined as
$$
\rhob(z)=\frac{\rho_{\rm eq}(zH)}{\rho_0},\qquad
N^2=N^2(z)=-\frac{d}{dz}\rhob,
$$
where it is implicitly assumed that $\rho_{\rm eq}$ is a non-increasing function of $z$ (which is an obvious and classical condition for the linear stability of the equilibrium); the (depth depending) quantity $N$ is the nondimensionalized \emph{Brunt-V\"ais\"al\"a} -- or buoyancy -- frequency.

With these new variables and unknowns, the equations  (\ref{Euler_pert})-(\ref{BC_pert}) become after
dropping the tildes and separating the equations for the horizontal and vertical velocities, and in the nondimensionalized fluid domain $X\in \R^d$, $z\in (-1,0)$, 
\begin{equation}
\label{Euler_ND}
\left\lbrace
\begin{array}{l}
\dsp (\dt V+\eps U\cdot\grad V)=-\nabla P,\vspace{1mm}\\
\dsp \mu(\dt w+\eps U\cdot\grad w)=-(\dz P+\rho),\vspace{1mm}\\
\dsp \dt \rho+\eps U\cdot\grad \rho - N^2 w=0,\vspace{1mm}\\
\dsp \nabla_{X,z}\cdot U=0,
\end{array}\right.
\end{equation}
where $\nabla$ stands for the gradient operator taken with respect to the horizontal variables alone. These equations are
complemented by the boundary conditions
\begin{equation}\label{BC_ND}
w_{\vert_{z=-1}}=w_{\vert_{z=0}}=0.
\end{equation}

\subsection{Uniform well-posedness}\label{sectunif}

One can without difficulty adapt the proof of Theorem \ref{theorem1} to get an existence result for the nondimensionalized system \eqref{Euler_ND}-\eqref{BC_ND}; however, such a proof does not provide an existence time that is uniform with respect to $\mu$ in the shallow water limit, that is, when $\mu \to 0$. The following theorem provides an existence time of order $O(\eps/\sqrt{\mu})$ and which is therefore uniformly of size $O(1)$ when the perturbation are of {\it medium amplitude} ($\eps=O(\sqrt{\mu})$ and uniformly of size $O(1/\sqrt{\eps})$ in the {\it weakly nonlinear regime} $\eps=O(\mu)$.
\begin{theorem}\label{theorem3}
Assume that the Br\"unt-Vais\"al\"a frequency $N>0$ is independent of $z$ and let $k_0\in \N$, $\nu=2k_0 > d+2$. Then for all $\rho^0, U^0\in H^{\nu}(\cS)$  such that
$\grad\cdot U^0=0$ and 
$$
\begin{cases}
\forall 0\leq k\leq k_0-1,&\dz^{2k}\rho^{0}=\dz^{2k+1}V^{0}=0 \\
\forall 0\leq k\leq k_0,&\dz^{2k}w^{0}=0
\end{cases}
\quad \mbox{ at }z=-1,0,
$$
there exists $T>0$ such that for all $0<\eps,\mu \leq 1$,  there is  a unique solution $(U,\rho)\in
C([0,\frac{T}{\eps/\sqrt{\mu}}];H^{n}(\cS)^{d+2})$ to
\eqref{Euler_ND}-\eqref{BC_ND} with initial condition
$(U^0,\rho^0)$.
\end{theorem}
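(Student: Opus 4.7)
My plan is to adapt the energy method used in Theorem~\ref{theorem1} to the nondimensional system \eqref{Euler_ND}-\eqref{BC_ND} with a $\mu$-weighted energy adapted to the shallow water scaling. The key observation is that testing the linear part of \eqref{Euler_ND} by $V$, $w$ and $\rho/N^2$ leads to the conservation of
\[
\int_\cS\Bigl(|V|^2+\mu|w|^2+\tfrac{1}{N^2}|\rho|^2\Bigr)\,dX\,dz,
\]
with the pressure terms cancelling globally thanks to $w|_{z=-1,0}=0$. Trying instead to control $\|V\|_2^2$ alone would force one to estimate $\|\nabla P\|_2$ elliptically, but the degenerate operator $-\mu\Delta-\dz^2$ only gives $\|\nabla P\|_2=O(1/\sqrt{\mu})$; this is precisely why a direct application of Theorem~\ref{theorem1} fails to be uniform in $\mu$.

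The first step is to verify that the boundary hierarchy assumed on the initial data is preserved by the nonlinear flow. This is done by induction on the number of vertical derivatives, using that the advective terms $\eps U\cdot\grad$ respect the alternating parity structure in which $V$ has opposite parity to $w$ and $\rho$, and that $N$ being constant no $z$-dependent coefficient mixes the parities. One then introduces the high-order energy
\[
\mathcal E_\nu(t)=\sum_{k=0}^{k_0}\sum_{|\alpha|\leq \nu-2k}\int_\cS \Bigl(|\partial^\alpha\dz^{2k}V|^2+\mu|\partial^\alpha\dz^{2k}w|^2+\tfrac{1}{N^2}|\partial^\alpha\dz^{2k}\rho|^2\Bigr)\,dX\,dz,
\]
applies $\partial^\alpha\dz^{2k}$ to each equation and tests by the corresponding quantity. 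The crucial integration by parts
\[
-\int \partial^\alpha\dz^{2k}V\cdot\nabla\partial^\alpha\dz^{2k}P = \int \partial^\alpha\dz^{2k+1}P\,\partial^\alpha\dz^{2k}w
\]
produces no boundary contribution because $\dz^{2k}w|_{z=-1,0}=0$ is propagated from the initial data, and substituting $\dz^{2k+1}P$ through the vertical momentum equation reproduces exactly the cancellation of the linear $L^2$ identity, up to commutator terms.

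The nonlinear commutators $[\partial^\alpha\dz^{2k},U\cdot\grad]V$ (and the analogous ones for $w$ and $\rho$) are estimated by standard Moser inequalities, using $\dz w=-\nabla\cdot V$ to trade vertical derivatives of $w$ for horizontal derivatives of $V$ whenever possible in order to avoid spurious $\mu^{-1/2}$ losses. Combining everything, one obtains a differential inequality of the form
\[
\frac{d}{dt}\mathcal E_\nu\leq C(\mathcal E_\nu)\,\frac{\eps}{\sqrt{\mu}}\,\mathcal E_\nu,
\]
from which an ODE comparison argument yields an existence time of order $\sqrt{\mu}/\eps$, uniform in $(\eps,\mu)\in(0,1]^2$. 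The main obstacle I anticipate is closing the estimate at the topmost order $k=k_0$: a direct elliptic bound on $\dz^{2k_0+1}P$ would lose one derivative on $\rho$, so the identity above must be used not as an estimate on the pressure but as a rewriting that trades it for quantities already contained in $\mathcal E_\nu$, thereby closing the estimate without any additional regularity assumption.
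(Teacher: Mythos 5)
Your proposal follows essentially the same route as the paper's proof: propagation of the boundary hierarchy by induction on the order of the vertical derivatives, a high-order energy built only from \emph{even} vertical derivatives with the weight $\mu$ on $w$, cancellation of the pressure term via the divergence-free condition and the vanishing of $\dz^{2k}w$ at $z=-1,0$, and a single unavoidable $\mu^{-1/2}$ loss in the commutators (from $[\Lambda^{\nu},w]\dz V$ and $[\Lambda^{\nu},w]\dz\rho$) leading to the time scale $\sqrt{\mu}/\eps$. The only step you gloss over is the norm equivalence $\sum_{k\leq k_0}\Abs{\Lambda^{\nu-2k}\dz^{2k}u}_2\sim\Abs{u}_{H^\nu}$, which the paper establishes by integrating by parts using the same boundary conditions and which is needed so that the Moser bounds on the commutators can be closed in terms of your energy $\mathcal{E}_\nu$.
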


\begin{proof}
A key step in the proof is that $w$ and its vertical derivatives of even order vanish at the boundary. We will use the following lemma.
\begin{lemma}\label{lemmaBC}
 If $(U,\rho)$ is a smooth solution to \eqref{Euler_ND}-\eqref{BC_ND} on a time interval $[0,T]$ such that for some $k_0\in \N$ the initial data satisfy
$$
\forall 0\leq j\leq k_0,\qquad \dz^{2j}w^{0}=\dz^{2j}\rho^{0}=\dz^{2j+1}V^{0}=0 \quad \mbox{ at }z=-1,0.
$$
These cancellations are propagated by the equations on the time interval $[0,T]$. Moreover, if  $ \dz^{2k_0+2}w^{\rm in}\,_{\vert_{z=-1,0}}=0$ then $ \dz^{2k_0+2}w_{\vert_{z=-1,0}}=0$ on $[0,T]$.
\end{lemma}
\begin{proof}[Proof of the lemma]
 Let us prove first the following identities for  $0\leq j \leq k_0$, 
$$
(\dz^{2j} w)_{\vert_{z=0,-1}}=0, \qquad (\dz^{2j}\rho)_{\vert_{z=0,-1}}=0
\qquad (\dz^{2j+1}P)=0,\qquad 
\dz^{2j+1}V=0,
$$
at $z=-1,0$. Proceeding by induction, we assume that these identities hold for $0\leq j \leq k\leq k_0-1$ and want to prove that they hold for $j=k+1$.
We recall that
$$
-(\dz^2  +\mu \Delta) P=\dz \rho+ \eps \mu \grad\cdot (U\cdot \grad U)
$$
so that, after differentiating $2k+1$ times with respect to $z$,
\begin{equation}\label{int1}
- \dz^{2k+2}(\dz P +  \rho)=\eps \mu \dz^{2k+2} (U\cdot \grad w)+\eps\mu \dz^{2k+1}\nabla\cdot (U\cdot \grad V).
\end{equation}
Applying $\dz^{2k+2}$ to the equation on $w$ one also gets
$$
\mu \dt \dz^{2k+2 }w+\eps \mu \dz^{2k+2} (U\cdot \grad w)=- \dz^{2k+2}(\dz P +  \rho)
$$
which, together with \eqref{int1}, yields
\begin{align*}
\mu \dt \dz^{2k+2 }w&=\eps\mu \dz^{2k+1}\nabla\cdot (U\cdot \grad V)\\
&=\eps\mu \dz^{2k+1}\big[ V\cdot \nabla (\nabla\cdot V)+w \dz (\nabla\cdot V)+\sum_{l=1}^d \nabla V_l\cdot \partial_l V+\nabla w\cdot \dz V\big].
\end{align*}
Recalling that $\nabla\cdot V=-\dz w$, taking the trace of the above identity at $z=-1,0$ and using the induction assumption, one deduces
$$
\mu \big( \dt +\eps V\cdot \nabla +\eps (2k+1)(\dz w)) \dz^{2k+2 }w= 0 \quad\mbox{ at }\quad z=-1,0.
$$
Therefore, if $\dz^{2k+2 }w=0$ at $t=0$ on $z=-1,0$, it remains equal to $0$ for all times, which proves the first relation of the induction assumption for $j=k+1$.  We can now apply $\dz^{2k+2}$ to the equation on $\rho$ and take the trace at the boundaries to obtain
$$
\big( \dt + \eps V\cdot \nabla ) \dz^{2k+2}\rho=0 
$$
where we used the fact that $\dz^{2k+2}(N^2 w)=0$ at $z=-1,0$ for $N^2$ chosen as in the statement of the lemma and the fact that $\dz^{2j} w=0$ at $z=-1,0$ for $0\leq j\leq k+1$. This shows that the condition $ \dz^{2k+2}\rho=0$ at $z=-1,0$ propagates from the initial data and establishes the second relation of the induction assumption for $j=k+1$. Taking the trace of \eqref{int1} at $z=-1,0$ and using the results just proved yields the third relation. For the fourth one, we apply $\dz^{2k+3}$ to the equation on $V$ and take the trace at the boundaries to obtain
$$
(\dt +\eps V\cdot \nabla+\eps (2k+3)(\dz w))\dz^{2k+3}V=0 \quad \mbox{ at }z=-1,0,
$$
showing that the desired relation is propagated from the initial condition. \\
To conclude the induction, we need to show that the assumption is true for $j=0$. The condition on $w$ is obvious, and the fact that $\rho$, $\dz P$ and $\dz V$ also vanish at the boundaries can be deduced proceeding as above. The last point of the lemma is established by running the first step of the induction proof for $k=k_0+1$.
\end{proof}

Applying $\Lambda^{\nu-2k}\dz^{2k}$ ($0\leq k\leq k_0$) to the equations of
\eqref{Euler_ND}, we get
$$
\left\lbrace
\begin{array}{l}
\dsp (\dt \tilde V_k+\eps U\cdot\grad \tilde V_k)=-\nabla \tilde P_k +\eps F_{\rm h},   \vspace{1mm}\\
\dsp \mu(\dt \tilde w_k+\eps U\cdot\grad \tilde w_k)=-(\dz \tilde P_k+\tilde\rho_k)+\eps \mu F_{\rm v},\vspace{1mm}\\
\dsp \dt \tilde\rho_k+\eps U\cdot\grad \tilde\rho_k - N^2 \tilde w_k=\eps f,\vspace{1mm}\\
\dsp \nabla_{X,z}\cdot \tilde U_k=0,
\end{array}\right.
$$
where we denoted $\tilde U_k=\Lambda^{\nu-k}\dz^{2k} U$ and $\tilde
\rho_k=\Lambda^{\nu-k}\dz^{2k} \rho$, and where
\begin{align*}
F_{\rm h}=-[\Lambda^{\nu-2k}\dz^{2k},U]\cdot \grad V,\qquad F_{\rm v}=-  [\Lambda^{\nu-2k}\dz^{2k},U]\cdot \grad w,
\end{align*}
$$
f=- [\Lambda^{\nu-2k}\dz^{2k},U]\cdot \grad \rho.
$$
By the lemma, we also have the boundary conditions
$$
{{\tilde w}_k}\,_{\vert_{z=-1}}={{\tilde w}_k}\,_{\vert_{z=0}}=0.
$$
Multiplying the first equation by
$\tilde U_k$ and the second one by $\tilde \rho_k$, and integrating by
parts, we therefore get (recall that $\grad\cdot \tilde U_k=0$), 
\begin{align*}
\frac{d}{dt}\tilde E_k
&\leq \eps \Abs{F_{\rm h}}_2\Abs{\tilde V_h}_2+\eps \sqrt{\mu}\Abs{F_{\rm v}}_2\sqrt{\mu}\Abs{\tilde w_k}_2+\eps \Abs{f}_2\Abs{\tilde \rho_k}_2\\
&\leq \eps  \big( \Abs{F_{\rm h}}_2+ \sqrt{\mu}\Abs{F_{\rm v}}_2+  \Abs{f}_2\big) \tilde E_k^{1/2}
\end{align*}
where
$$
\tilde E_k=\frac{1}{2}\Abs{\tilde V_k}^2_2+\frac{1}{2}\mu \Abs{\tilde w_k}^2_2+\frac{1}{2}\Abs{\tilde\rho_k}_2^2
$$
With $\nu\geq 2t_0+2$, the commutator estimates of Appendix \ref{appcom} imply  that
$$
\Vert F_{\rm h}\Vert_2+\sqrt{\mu}\Vert F_{\rm v}\Vert_2+\Vert f\Vert_2 \lesssim
 \frac{1}{\sqrt{\mu}}  \big( \Abs{V}_{H^\nu}+ \sqrt{\mu}\Abs{w}_{H^\nu}+  \Abs{\rho}_{H^\nu}\big)^2;
$$
we note that the singular factor $\mu^{-1/2}$ comes from the components $-[\Lambda^{\nu}, w]\dz V$ of $F_{\rm h}$ and $ -[\Lambda^{\nu}, w]\dz \rho$ of $f$ in the case $k=0$.\\
Summing over $0\leq k\leq k_0$ and using the above estimates, it follows that
\begin{align*}
\frac{d}{dt}\sum_{k=0}^{k_0}\tilde E_k
&\lesssim \frac{\eps}{\sqrt{\mu}}  \big( \Abs{V}_{H^\nu}+ \sqrt{\mu}\Abs{w}_{H^\nu}+  \Abs{\rho}_{H^\nu}\big)^2 \Big(\sum_{k=0}^{k_0}\tilde E_k\Big)^{1/2}.
\end{align*}
It is therefore possible to conclude by a nonlinear Gronwall type argument if we are able to show that
$$
\big( \Abs{V}_{H^\nu}+ \sqrt{\mu}\Abs{w}_{H^\nu}+  \Abs{\rho}_{H^\nu}\big)^2 \lesssim \sum_{k=0}^{k_0}\tilde E_k,
$$
which is a direct consequence of the following lemma.
\begin{lemma}
{\bf i.} There is a constant $C$ such that for all smooth functions $V$, $w$ and $\rho$ such that 
$$
\dz^{2k+1} V_{\vert_{z=-1,0}}=\dz^{2k} w_{\vert_{z=-1,0}}=\dz^{2k} \rho_{\vert_{z=-1,0}}+0
\quad\mbox{ for }\quad k=0,\dots,k_0-1,
$$
 one has, with $u=V$, $w$ or $\rho$,
\begin{align*}
\frac{1}{C} \Vert u \Vert_{H^\nu} \leq \sum_{k=0}^{k_0} \Vert \Lambda^{\nu-2k}\dz^{2k} u \Vert_{2} \leq C \Vert u \Vert_{H^\nu}.
\end{align*}
\end{lemma}
\begin{proof}[Proof of the lemma]
 We need to show that it is possible to control $\Lambda^{\nu-2k-1}\dz^{2k+1} V$  ($1\leq k\leq k_0$) in $L^2(\cS)$ by $ \sum_{k=0}^{k_0} \Vert \Lambda^{\nu-2k}\dz^{2k} V \Vert_{2}$. This is a consequence of Lemma \ref{lemmaBC} which under the assumption made on the vanishing of the vertical derivatives of $V$ of odd order, allows one to write
\begin{align*}
\Vert \Lambda^{\nu-2k-1}\dz^{2k+1} V\Vert_2^2 &= \int_{\cS} \Lambda^{\nu-2k-1}\dz^{2k+1} V\cdot \Lambda^{\nu-2k-1}\dz^{2k+1} V,\\
&=-\int_{\cS} \Lambda^{\nu-2k}\dz^{2k} V\cdot \Lambda^{\nu-2k-2}\dz^{2k+2} V,\\
&\leq \Abs{\Lambda^{\nu-2k}\dz^{2k} V}_2 \Abs{\Lambda^{\nu-2k-2}\dz^{2k+2} V}_2.
\end{align*}
The result for $w$  and $\rho$ is proved similarly.
\end{proof}
The proof of the theorem is therefore complete.
\end{proof}
\begin{remark}
For surface waves (water waves), the existence time one obtains in the weakly nonlinear regime in shallow water --and for the asymptotic models derived under these assumptions (Boussinesq systems or the KdV and BBM equations, etc.)-- the existence time is $O(1/\eps)$, uniformly with respect to $\mu \in (0,1)$ (see for instance \cite{Lannes_NL} and references therein). This is also true for interfacial waves between two layers of fluids of different densities \cite{Lannes2} and corresponding asymptotic models (see for instance \cite{BLS,CGK} and the review \cite{Saut} and references therein). The reason why the existence time is only $O(\eps/\sqrt{\mu})$ is partly due to the fact that irrotationality cannot be used. Indeed, if the vertical vorticity were $O(\sqrt{\mu})$, i.e if we had $\dz V=\mu \nabla w +O(\sqrt{\mu})$ then the term $[\Lambda^\nu,w]\dz V$ would not be singular with respect to $\mu$ anymore. This is consistent with \cite{Castro} where it is shown that in the presence of vorticity, the water waves equations are well posed on a time scale $O(\eps/\sqrt{\mu})$ under the assumption that the vertical vorticity is $O(\sqrt{\mu})$.
\end{remark}
\subsection{Modal decomposition and nonlinear mixing}\label{sectmodeNL}

Proceeding as for the derivation of \eqref{eqbig}, one can derive from  the linear version of \eqref{Euler_ND}-\eqref{BC_ND} the following equation for $w$,
\begin{equation}\label{eqbigbis}
\dt^2\Big[\Big(\mu \Delta +\dz^2 \Big)w\Big]+N^2 \Delta w=0,
\end{equation}
where $N$  is the Br\"unt-Vais\"al\"a frequency that is now assumed to be independent of $z$. Instead of \eqref{SL}, the relevant Sturm-Liouville problem for a modal decomposition is now
\begin{equation}\label{SLNL}
\left\lbrace
\begin{array}{l}
\dsp \frac{d^2}{dz^2}{\bf
  f}_n+\frac{N^2}{c_n^2}{\bf f}_n=0,\\
\dsp  {\bf f}_n(-1)={\bf f}_n(0)=0;
\end{array}\right.
\end{equation}
for this Sturm-Liouville problem, the eigenvalues and the orthonormal (for the $L^2(N^2 {\rm d}z) $ scalar product)- basis of eigenfunctions are explicit, as well as the functions ${\bf g}_n$ defined in Lemma \ref{lemmebases} (which are now orthonormal for the $L^2$ scalar product),
$$
c_n=\frac{N}{n\pi},\qquad {\bf f}_n=\frac{\sqrt{2}}{N}\sin(n\pi z), \qquad {\bf g}_n=\sqrt{2}\cos(n\pi z).
$$
The modal decompositions \eqref{decompw} and \eqref{decomp} are consequently replaced by 
\begin{equation}\label{decompbis}
\begin{array}{lcl}
V(t,X,z)=\dsp \sum_{n=0}^\infty V_n (t,X)
{\bf g}_n(z),&
P(t,X,z)=\dsp \sum_{n=0}^\infty P_n(t,X){\bf g}_n(z),\\
w(t,X,z)= \dsp \sum_{n=0}^\infty w_n (t,X)
{\bf f}_n(z), &
\rho(t,X,z)=\dsp \sum_{n=1}^\infty \rho_n
(t,X)N^2{\bf f}_n(z).
\end{array}
\end{equation}

\begin{proposition}\label{propNL}
The smooth solutions of \eqref{Euler_ND}-\eqref{BC_ND} admit a modal decomposition of the form \eqref{decompbis} that satisfy the following coupled system of evolution equations up to terms of size $O(\eps\mu)$,
$$
\begin{cases}
\dsp \big(1-\mu \frac{1}{\pi^2 n^2}\Delta \big)\dt V_n+\frac{N}{n\pi}\nabla \rho_n=
- \eps \frac{1}{\sqrt{2}}\sum_{(p\pm q)^2=n^2} \big[ V_p\cdot \nabla V_q \pm \frac{q}{p} \nabla\cdot V_ p V_q\big],\\
\dsp \dt \rho_n+ \frac{N}{n\pi}\nabla\cdot V_n=\eps \frac{1}{N^2}\frac{1}{\sqrt{2}} \sum_{(p\pm q)^2=n^2} \pm \big[ V_p\cdot \nabla \rho_q +\frac{q}{p} \nabla\cdot V_p \rho_q\big].
\end{cases}
$$
\end{proposition}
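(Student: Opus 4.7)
The proof is a direct computation in the spirit of Proposition \ref{proplin}, exploiting the fact that for constant $N$ the Sturm--Liouville eigenfunctions are the explicit trigonometric functions ${\bf f}_n(z)=\frac{\sqrt{2}}{N}\sin(n\pi z)$ and ${\bf g}_n(z)=\sqrt{2}\cos(n\pi z)$ with $c_n=N/(n\pi)$. My plan is, first, to project the incompressibility and vertical momentum equations to identify the kinematic relation $w_n=-c_n\nabla\cdot V_n$ (from $\grad\cdot U=0$ together with the Dirichlet boundary conditions on $w$) and an expression for the non-hydrostatic pressure; second, to insert that expression into the horizontal momentum equation to extract the dispersive structure of $\dt V_n$ modulo $O(\eps\mu)$; and third, to evaluate the nonlinear projections by means of trigonometric product-to-sum identities, from which the resonance condition $(p\pm q)^2=n^2$ emerges naturally.

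For the first step, projecting the vertical momentum equation onto ${\bf f}_n$ (using that $N^2{\bf f}_n$ is orthonormal in $L^2(N^{-2}{\rm d}z)$) gives
\begin{equation*}
\frac{\mu}{N^2}\dt w_n+\eps\mu\int_{-1}^0(U\cdot\grad w){\bf f}_n \,{\rm d}z=\frac{P_n}{c_n}-\rho_n;
\end{equation*}
combining with $w_n=-c_n\nabla\cdot V_n$ and $c_n^2/N^2=1/(n\pi)^2$ yields $P_n=c_n\rho_n-\mu(n\pi)^{-2}\nabla\cdot\dt V_n+O(\eps\mu)$. Substituting into the projection of the horizontal momentum equation then produces $(1-\mu(n\pi)^{-2}\Delta)\dt V_n+c_n\nabla\rho_n=-\eps(U\cdot\grad V,{\bf g}_n)_{L^2({\rm d}z)}+O(\eps\mu)$, provided one may rewrite $\nabla(\nabla\cdot \dt V_n)=\Delta\dt V_n$; this identity holds under $\nabla^\perp\cdot V_n=0$, a property that is propagated modulo $O(\eps)$ by the horizontal momentum equation itself, and the resulting discrepancy only multiplies the already-$O(\mu)$ dispersive correction, so it fits in the $O(\eps\mu)$ error. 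The density equation is handled similarly; no $\mu$ correction appears there because its original form carries no $\mu$ prefactor.

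The core of the third step is the evaluation of the nonlinear projections. Writing $U\cdot\grad V=V\cdot\nabla V+w\dz V$ and using ${\bf g}_n'=-N^2c_n^{-1}{\bf f}_n$ together with $w_p=-c_p\nabla\cdot V_p$, each projection reduces to an integral of a product of three trigonometric functions of the form $\int_{-1}^0\cos(p\pi z)\cos(q\pi z)\cos(n\pi z)\,{\rm d}z$, $\int_{-1}^0\sin(p\pi z)\sin(q\pi z)\cos(n\pi z)\,{\rm d}z$ or $\int_{-1}^0\cos(p\pi z)\sin(q\pi z)\sin(n\pi z)\,{\rm d}z$. Elementary product-to-sum identities convert each of these into a signed sum of $\int_{-1}^0\cos((p\pm q\pm n)\pi z)\,{\rm d}z$; these integrals vanish unless the integer $p\pm q\pm n$ is zero, that is, unless $(p\pm q)^2=n^2$, which is the claimed selection rule. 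The ratio $c_p/c_q=q/p$ produces the factor $\frac{q}{p}$ in front of $\nabla\cdot V_p\,V_q$ and of $\nabla\cdot V_p\,\rho_q$, while the contrasting sign structure of the $\cos\cos$ versus $\sin\sin$ product-to-sum identities explains why $V_p\cdot\nabla V_q$ appears without a $\pm$ prefactor whereas $\frac{q}{p}\nabla\cdot V_p V_q$ carries one. The main technical obstacle is the careful bookkeeping of these signs across the several resonance branches and the verification that the $O(\eps\mu)$ remainder from the non-hydrostatic pressure and from the irrotationality reduction never combine with the leading $\mu$-dispersive term to produce an $O(\mu)$ pollution; since every such remainder carries at least one factor of $\eps$ and one of $\mu$, it fits into the stated $O(\eps\mu)$ correction.
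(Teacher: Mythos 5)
Your proposal is correct and follows essentially the same route as the paper: project each equation of \eqref{Euler_ND} onto the explicit trigonometric eigenfunctions ${\bf f}_n$, ${\bf g}_n$, use $w_n=-c_n\nabla\cdot V_n$ and the vertical momentum balance to express the non-hydrostatic pressure correction, and reduce the nonlinear projections to triple products of sines and cosines whose product-to-sum expansion yields the selection rule $(p\pm q)^2=n^2$ with the coefficients $\pm\frac{1}{\sqrt 2}$ and $c_p/c_q=q/p$. Your explicit justification of the replacement $\nabla(\nabla\cdot\dt V_n)\mapsto\Delta\dt V_n$ via approximate irrotationality (the error being $O(\eps)$ times the $O(\mu)$ dispersive term) is a point the paper passes over silently, and is a welcome addition.
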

\begin{remark}
The nonlinear terms induce a mixing of the different modes of a different nature than the dispersive mixing exhibited in the linear case when $N$ is not constant. It can be expected that after some time, the coupling becomes less and less efficient. Indeed, at first order, $V_p$ and $V_q$ ($p\neq q$) travel at a different speed, and if they are localized enough, the product $V_p\cdot \nabla V_q$ (as well as the other coupling terms) becomes very small for large times. Such an asymptotic has been used for instance in diffractive optics \cite{LannesAA}. 
\end{remark}
\begin{proof}
Decomposing $V$, $w$, $\rho$ and $P$ as in \eqref{decompbis} and taking the $L^2(-1,0)$ scalar product of the first equation of \eqref{Euler_ND} with ${\bf g}_n$ gives
$$
\dt V_n+\nabla P_n= - \eps \sum_{p=1}^\infty\sum_{q=1}^\infty \big[\beta_{pqn} V_p\cdot \nabla V_q +\gamma_{pqn} w_p \frac{V_q}{c_q}\big]
$$
with
$$
\beta_{pqn}=({\bf g}_p{\bf g}_q,{\bf g}_n), \qquad \gamma_{pqn}=-N^2 ({\bf f}_p{\bf f}_q,{\bf g}_n)
$$
Taking the $L^2(-1,0)$ scalar product of the second equation of \eqref{Euler_ND} ${\bf f}_n$ gives similarly
$$
\frac{1}{N^2}\mu \dt w_n+\rho_n- \frac{1}{c_n}P_n=O(\eps\mu)
$$
For the equation on $\rho$, we take the scalar product with ${\bf f}_n$ to obtain
$$
\dt \rho_n- w_n=\eps \frac{1}{N^2} \sum_{p=1}^\infty\sum_{q=1}^\infty \big[\gamma_{pqn} V_p\cdot \nabla \rho_q +\gamma_{pnq} w_p \frac{\rho_q}{c_q}\big]
$$
while the divergence free condition yields
$$
w_n=-c_n\nabla\cdot V_n.
$$
Up to terms of order $O(\eps\mu)$, we therefore obtain the following system on $(V_n,\rho_n)$,
$$
\begin{cases}
\dsp \big(1-\mu \frac{c_n^2}{N^2}\Delta \big)\dt V_n+c_n\nabla \rho_n=
- \eps \sum_{p=1}^\infty\sum_{q=1}^\infty \big[\beta_{pqn} V_p\cdot \nabla V_q +\gamma_{pqn}\frac{c_p}{c_q} \nabla\cdot V_ p V_q\big],\\
\dsp \dt \rho_n+ c_n\nabla\cdot V_n=\eps \frac{1}{N^2} \sum_{p=1}^\infty\sum_{q=1}^\infty \big[\gamma_{pqn} V_p\cdot \nabla \rho_q +\gamma_{pnq}\frac{c_p}{c_q} \nabla\cdot V_p \rho_q\big]
\end{cases}
$$
In order to get the result of the proposition, we just need to compute the coefficients $\beta_{pqn}$ and $\gamma_{pqn}$ using the explicit expression of the eigenfunctions derived above. One readily checks that $\sqrt{2}{\bf g}_p{\bf g}_q{\bf g}_n$ is equal to
$$
\cos\big((p+q+n)\pi z \big)+\cos\big((p+q-n)\pi z \big)+\cos\big((p-q+n)\pi z \big)+\cos\big((p-q-n)\pi z \big)
$$
Therefore one has $\beta_{pqn}=0$ except if $(p\pm q)^2=n^2$ in which case $\beta_{pqn}=\frac{1}{\sqrt{2}}$. Similarly, $\gamma_{pqn}=0$ unless $(p\pm q)^2=n^2$, in which case $\gamma_{pqn}=\pm \frac{1}{\sqrt{2}}$. The proposition follows.

\end{proof}

\section{Perspectives}\label{sectperspectives}

Let us briefly mention here some interesting perspective for further works. An obvious one would be to take into account Coriolis effects as they become relevant at large oceanic scales. In particular, the dispersive effects due to the Earth rotation are known to be important (see for instance \cite{Chelton}) and it would be interesting to see how they compare to the dispersive effects investigated in this paper. In the same vein, taking into account thermal effect, salinity, etc. are important for many physical applications. For atmospheric studies, it may also be relevant to relax the incompressibility assumption; the analysis of the Sturm-Liouville decomposition is then complicated by the interaction with acoustic modes \cite{BK}.

\medbreak

Another interesting generalization is to consider a non zero background current, i.e. to take a non zero $U_{\rm eq}$ in \eqref{equil}. In this context, and in the shallow water limit, Maslowe and Redekopp \cite{MR} showed that it is possible to construct approximate solutions concentrated on the first eigenmode and that the corresponding coefficient satisfies a nonlinear KdV equation. This is in sharp contrast with the nonlinear Boussinesq type system derived in Proposition \ref{propNL}. Indeed, in this latter, a mode $n$ can only interact nonlinearly with modes $p$ and $q$ such that $(p\pm q)^2=n^2$. In particular, self quadratic interaction of the coefficient of the $n$-th mode cannot occur. This does not contradict \cite{MR} since one can check in that reference that the coefficient in front of the nonlinear term in the KdV equation vanishes when the background current is taken equal to $0$. This suggests however that including background currents leads to additional interesting mathematical and physical phenomenons. 

\medbreak

Let us mention a last important perspective. As said in the introduction, when the stratification is continuous but varies very rapidly in a thin layer, two layers models are used to describe internal waves: instead of a single continuously stratified fluid, one considers two fluids of different densities separated by an interface. The propagation of internal waves therefore reduces to the study of the evolution of this interface. The convergence of internal waves with a sharp continuous background stratification towards the solution of the corresponding two-fluids models is therefore a natural question. It was answered by the affirmative in \cite{James} in the particular case of solitary waves. The general case of non steady waves is much more complex because of the presence of Kelvin-Helmholtz instabilities created by the discontinuity of the tangential velocity at the interface. In particular, two-fluids Euler equations are ill posed \cite{Ebin,IguchiTani,KamotskiLebeau}; including surface tension effects, well-posedness is restored and a generalized Rayleigh-Taylor criterion governing well-posedness can be derived \cite{Lannes2}. The interest of this last result is that it shows that a very small enough of surface tension is enough to stabilize interfacial waves, but its drawback is that, in the context of internal waves, there is no natural definition of surface tension. It is natural to conjecture that an alternative mechanism for the control of Kelvin-Helmholtz instabilities could come from the sharp but continuous variation of the density at the "interface" (indeed, as shown here, continuously stratified models are locally well posed). In order to understand this mechanism, a first step is to study the behavior of the Sturm-Liouville modal decomposition as the background stratification $\rho_{\rm eq}$ converges to a discontinuous stratification.\\
To be more precise, let $\alpha \in C^\infty(\R)$ be a positive function, compactly supported in $(0,1)$ and such that $\int_\R \alpha =1$, and define the smoothed jump function $\chi$ as $\chi(x)=\int_{-\infty}^x \alpha$. We consider here a strip with height $H=1$ and a family of  continuous stratifications $(\rho_{{\rm eq},\delta})_{0<\delta<1}$ that converges as $\delta \to 0$ to a discontinuous stratification with jump located at $z=z_0\in (-1,0)$ with density $\rho_+$ in the lower layer and $\rho_-$ in the upper one,
\begin{equation}\label{densitysharp}
\rho_{{\rm eq},\delta}(z)=\rho_{+}\exp(-\delta z)-(\rho_+-\rho_-)\chi(\frac{z-z_0}{\delta})\qquad (\rho_+>\rho_-),
\end{equation}
for all $z\in [-1,0]$, and we consider the associated Sturm-Liouville problem
\begin{equation}\label{SLdelta}
\frac{1}{\rho_{{\rm eq},\delta}}\frac{d}{dz}\big( \rho_{{\rm eq},\delta}\frac{d}{dz} f_{n,\delta}\big)+\frac{N^2}{c_{n,\delta}^2} f_{n,\delta}=0, \qquad (-1< z<0),
\end{equation}
with the usual boundary conditions $f_{n,\delta}(-1)=f_{n,\delta}(0)=0$.
The following proposition provides the asymptotic behavior of the first eigenvalue and of the associated eigenmode as $\delta \to 0$.
\begin{proposition}
Let $\underline{c}$ and $\underline{f}$ be defined as
$$
\underline{c}^2=(\rho_+-\rho_-) g \Big( \frac{\rho_+}{(z_0+1)}+\frac{\rho_-}{(-z_0)} \Big)^{-1}
$$
and
$$
\underline{f}(z)=\begin{cases}
a(z+1)& \mbox{ if }-1\leq z \leq z_0,\\
a \frac{1+z_0}{z_0}z & \mbox{ if }  z_0 \leq z\leq 0,
\end{cases}
\quad\mbox{ with }\quad a=\frac{1}{\sqrt{(\rho_+-\rho_-) g }(z_0+1)}.
$$
For all $\delta\in (0,1)$, let also $c_{1,\delta}^{-2}$ be the smallest eigenvalue of the Sturm-Liouville problem \eqref{SLdelta} and denote by $f_{1,\delta}$ the associated unit eigenfunction such that $f'_{1,\delta}(-1)>0$. Then, as $\delta \to 0$, the following approximations hold
$$
c_{1,\delta}^2=\underline{c}^2+O(\delta)
\quad\mbox{ and }\quad
\abs{f_{1,\delta}-\underline{f}}_{L^\infty(-1,0)}=O(\delta).
$$
\end{proposition}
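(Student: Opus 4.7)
\medbreak

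\noindent\emph{Proof proposal.}

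The candidate limit problem. The driving observation is that, as $\delta\to 0$, the smooth stratification degenerates to the piecewise constant $\tilde\rho:=\rho_+{\bf 1}_{\{z<z_0\}}+\rho_-{\bf 1}_{\{z>z_0\}}$, while the weight appearing in the eigenvalue problem, $\rho_{{\rm eq},\delta}N^2=-g\rho_{{\rm eq},\delta}'$, splits as $\rho_+\delta e^{-\delta z}+(\rho_+-\rho_-)\delta^{-1}\alpha((z-z_0)/\delta)$, so that $-\rho_{{\rm eq},\delta}'\,{\rm d}z$ converges weakly (with rate $O(\delta)$ against Lipschitz test functions) to $(\rho_+-\rho_-)\delta_{z_0}$. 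Integrating \eqref{SLdelta} across a shrinking neighborhood of $z_0$ suggests the limit problem $(\tilde\rho f')'=0$ on $(-1,z_0)\cup(z_0,0)$ with $f(-1)=f(0)=0$, continuity at $z_0$, and the jump condition $\rho_-f'(z_0^+)-\rho_+f'(z_0^-)=-g(\rho_+-\rho_-)c^{-2}f(z_0)$. A piecewise affine ansatz combined with continuity and the jump condition yields a linear system for the slopes and the eigenvalue whose unique solution with $f'(-1)>0$ and normalization $g(\rho_+-\rho_-)f(z_0)^2=1$ (the weak limit of $\int\rho_{{\rm eq},\delta}N^2f_{1,\delta}^2=1$) is precisely $(\underline{c}^2,\underline{f})$.

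Uniform estimates and variational lower bound. I use the Rayleigh characterization
$c_{1,\delta}^2=\sup_{f\in H^1_0\setminus\{0\}} (-g\int\rho_{{\rm eq},\delta}'f^2)/\int\rho_{{\rm eq},\delta}(f')^2.$
Testing against $\underline{f}\in H^1_0$ and using $\rho_{{\rm eq},\delta}\to\tilde\rho$ in $L^1$ with rate $O(\delta)$ together with the weak measure convergence above applied to the Lipschitz function $\underline{f}^2$ gives $c_{1,\delta}^2\geq\underline{c}^2-C\delta$. A uniform upper bound on $c_{1,\delta}^2$ follows from $|g\int\rho_{{\rm eq},\delta}'f^2|\leq g\|\rho_{{\rm eq},\delta}'\|_{L^1}\|f\|_\infty^2$ combined with Poincaré; the identity $\int\rho_{{\rm eq},\delta}(f_{1,\delta}')^2=c_{1,\delta}^{-2}$ (obtained by pairing \eqref{SLdelta} with $f_{1,\delta}$ under the $L^2(\rho_{{\rm eq},\delta}N^2{\rm d}z)$ normalization) then provides a uniform bound on $\|f_{1,\delta}\|_{H^1_0}$.

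ODE analysis, upper bound and $L^\infty$ rate. Because the variational approach yields at best $H^1$-convergence — hence only $O(\sqrt\delta)$ in $L^\infty$ via Sobolev embedding — the sharp $O(\delta)$ rate requires a direct matched-asymptotic analysis. Integrating \eqref{SLdelta} once produces the representation
$\rho_{{\rm eq},\delta}(z)f_{1,\delta}'(z)=\rho_{{\rm eq},\delta}(-1)f_{1,\delta}'(-1)-c_{1,\delta}^{-2}\int_{-1}^z\rho_{{\rm eq},\delta}N^2f_{1,\delta}\,{\rm d}z',$
whose right-hand side is essentially piecewise constant: its integrand consists of a bulk term of order $\delta$ plus the concentrated profile $(\rho_+-\rho_-)\delta^{-1}\alpha((z-z_0)/\delta)f_{1,\delta}$, so that $\rho_{{\rm eq},\delta}f_{1,\delta}'$ takes values $B_-+O(\delta)$ on $(-1,z_0-\delta)$ and $B_++O(\delta)$ on $(z_0+\delta,0)$ with $B_+=B_--g(\rho_+-\rho_-)c_{1,\delta}^{-2}f_{1,\delta}(z_0)+O(\delta)$. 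A second integration then shows that $f_{1,\delta}$ is, up to $O(\delta)$ in $L^\infty$, affine with slope $B_\pm/\rho_\pm$ on each outer interval, and in the inner layer the rescaling $\zeta=(z-z_0)/\delta$, $F(\zeta)=f_{1,\delta}(z_0+\delta\zeta)$, reduces the equation to $(\tilde\rho(\zeta)F'(\zeta))'=O(\delta)$ with $\tilde\rho(\zeta)=\rho_+-(\rho_+-\rho_-)\chi(\zeta)$, whence $F$ stays within $O(\delta)$ of a single constant across $\zeta\in[0,1]$. Imposing the Dirichlet conditions at $\pm1,0$ and the inner-layer matching produces a $2\times 2$ linear system for $(B_-,B_+)$ that is an $O(\delta)$ perturbation of the one defining $\underline{f}$; invertibility of the limiting system (guaranteed by $\rho_\pm>0$ and $-1<z_0<0$) combined with the normalization $\int\rho_{{\rm eq},\delta}N^2f_{1,\delta}^2=1$ and the sign convention $f_{1,\delta}'(-1)>0$ then delivers simultaneously $c_{1,\delta}^2=\underline{c}^2+O(\delta)$ and $|f_{1,\delta}-\underline{f}|_{L^\infty}=O(\delta)$.

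The main obstacle is the sharp control of the inner layer: one needs $F'$ to be uniformly bounded (not merely $L^2$), so that the flux $\tilde\rho(\zeta)F'(\zeta)$ can be tracked pointwise as $\zeta\to\pm\infty$ and matched with the outer slopes $\rho_\pm B_\pm/\rho_\pm$. This is what prevents a direct appeal to weak-convergence/Sobolev arguments and forces the ODE analysis above, together with careful bookkeeping of the $O(\delta)$ remainders generated by the bulk part $\rho_+\delta e^{-\delta z}$ of $\rho_{{\rm eq},\delta}'$ and by the $L^1$-error between $\rho_{{\rm eq},\delta}$ and $\tilde\rho$ in the inner strip.
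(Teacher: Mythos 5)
Your proposal is correct and follows essentially the same route as the paper: a Rayleigh-quotient bound obtained by testing with the explicit piecewise-affine profile $\underline{f}$, combined with a direct ODE analysis showing that $f_{1,\delta}$ is affine up to $O(\delta)$ on the two outer intervals and varies by only $O(\delta)$ across the transition layer of width $\delta$, after which matching, the Dirichlet conditions and the normalization pin down both $c_{1,\delta}^2$ and $f_{1,\delta}$. Your write-up is merely more explicit about the flux-jump condition at $z_0$ and the resulting $2\times 2$ linear system, which the paper leaves implicit.
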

\begin{proof}
{\bf Step 1}. Let us first prove that $\frac{1}{c_{1,\delta}^2}\leq \frac{1}{\underline{c}^2}+O(\delta)$. We recall that $c_{1,\delta}^2$ is given by the Rayleigh quotient
$$
\frac{1}{c_{1,\delta}^2}=\inf_{f\in C_{\rm pw}^1([-1,0]),f(-1)=f(0)=0} \frac{\int_{-1}^0 \rho_{{\rm eq},\delta} (f')^2 }{\int_{-1}^0 \rho_{{\rm eq},\delta} N^2_\delta f^2 },
$$
where $C^1_{\rm pw}([-1,0])$ denotes the space of continuous and piecewise $C^1$ functions on $[-1,0]$ and  $N_\delta^2=-g \rho'_{{\rm eq},\delta}/\rho_{{\rm eq},\delta}$. An upper bound for $c_{1,\delta}^2$ is therefore obtained by evaluating the quotient $R_\delta(f)$ in the right-hand-side with $f=\underline{f}$ given as in the statement of the proposition
(the amplitude coefficient $a$ is chosen so that $\int_{-1}^0 \rho_{{\rm eq},\delta} N^2_\delta \underline{f}^2 \to 1$ as $\delta \to 0$). Using the fact that $\frac{1}{\eps}\chi'(\frac{z-z_0}{\eps})$ is an approximation of unity converging to the Dirac distribution centered at $z=z_0$, one readily checks that $R(\underline{f})=\underline{c}^{-2}+O(\delta)$, which proves the result.\\
{\bf Step 2}. Let $f_{1,\delta}$ be a unit eigenfunction associated to the eigenvalue $1/c_{1,\delta}^2$ of the Sturm-Liouville problem. From the definition of $\rho_{{\rm eq},\delta}$,  $f_{1,\delta}$ solves the following ODE on $(-1,z_0-\delta)$ and $(z_0+\delta,1)$,
$$
f_{1,\delta}''-\delta f_{1,\delta}'+g \delta  f_{1,\delta}=0,
$$
from which we can deduce that
$$
f_{1,\delta}(z)=a_\delta (z+1)+O(\delta) \mbox{ on } [-1,z_0-\delta]
\quad\mbox{ and }\quad
f_{1,\delta}(z)=b_\delta z +O(\delta ) \mbox{ on } [z_0+\delta,1]
$$
with $a_\delta=O(1)$ and $b_\delta=O(1)$ as $\delta \to 0$. On the segment $[z_0-\delta,z_0+\delta]$ we can write
$$
f_{1,\delta}(z)=f_{1,\delta}(z_\pm)+(z-z_\pm)f_{1,\delta}'(z_\pm)+\int_{z_\pm}^z \frac{1}{\rho_{{\rm eq},\delta}}\int_{z_\pm}^{z'} \big( \rho_{{\rm eq},\delta} f_{1,\delta}' (z'')\big)' {\rm d}z''{\rm d}z',
$$
with $z_\pm =z_0\pm \delta$. Using the Sturm-Liouville equation to simplify the last integral, one gets
$$
f_{1,\delta}(z)=f_{1,\delta}(z_\pm)+(z-z_\pm)f_{1,\delta}'(z_\pm)+\int_{z_\pm}^z \frac{1}{\rho_{{\rm eq},\delta}}\int_{z_\pm}^{z'}  \frac{g}{c_{1,\delta}^2} \rho_{{\rm eq},\delta}'(z''){\rm d}z''{\rm d}z'.
$$
Using the definition of $\rho_{{\rm eq},\delta}$ and the upper bound in $1/{c_{1,\delta}}^2$ derived in Step 1, we obtain that on  $[z_0-\delta,z_0+\delta]$ one has
$$
f_{1,\delta}(z)=a_\delta(1+z_0)+O(\delta)=b_\delta z_0+O(\delta),
$$
so that $a_\delta=a+O(\delta)$, $b_\delta=a\frac{1+z_0}{z_0}+O(\delta)$ and the result follows.
\end{proof}

The behavior predicted by the proposition can be checked on Figure \ref{subfig2b} that represents the speeds associated to the first modes of the modal decomposition associated to \eqref{densitysharp} (with $\delta=0.0005$ and $z_0=-1/3$, in which case the formula for the asymptotic speed gives $\underline{c}\approx 0.209$) and on Figure \ref{fig5} where the first 6 modes are represented. A striking fact is that the formula for the asymptotic speed $\underline{c}$, which is valid even in non shallow water configurations, coincides with the formula for the propagation of linear waves in two-fluid models in shallow water \cite{BLS}. Moreover, for such two layers models, the vertical velocity has a linear dependence in $z$ and is continuous at the interface: it is therefore a multiple of the asymptotic eigenmode $\underline{f}$. It is therefore natural to conjecture that one could find the two-layers shallow water model as a double shallow water/sharp stratification limit by focusing on the first mode of the Sturm-Liouville decomposition and showing that the contribution of the other modes is negligible. This is consistent with the fact that, for surface water waves, the shallow water limit is somehow a "low frequency" limit, and this would also allow one to bypass the two-fluids Euler equations and the corresponding difficulties raised by Kelvin-Helmholtz instabilities. Indeed, such instabilities do not appear for the two-fluid shallow water equations if the discontinuity of the tangential velocity at the interface is small enough \cite{GLS,BR}.
\begin{figure}
\includegraphics[width=0.7\textwidth]{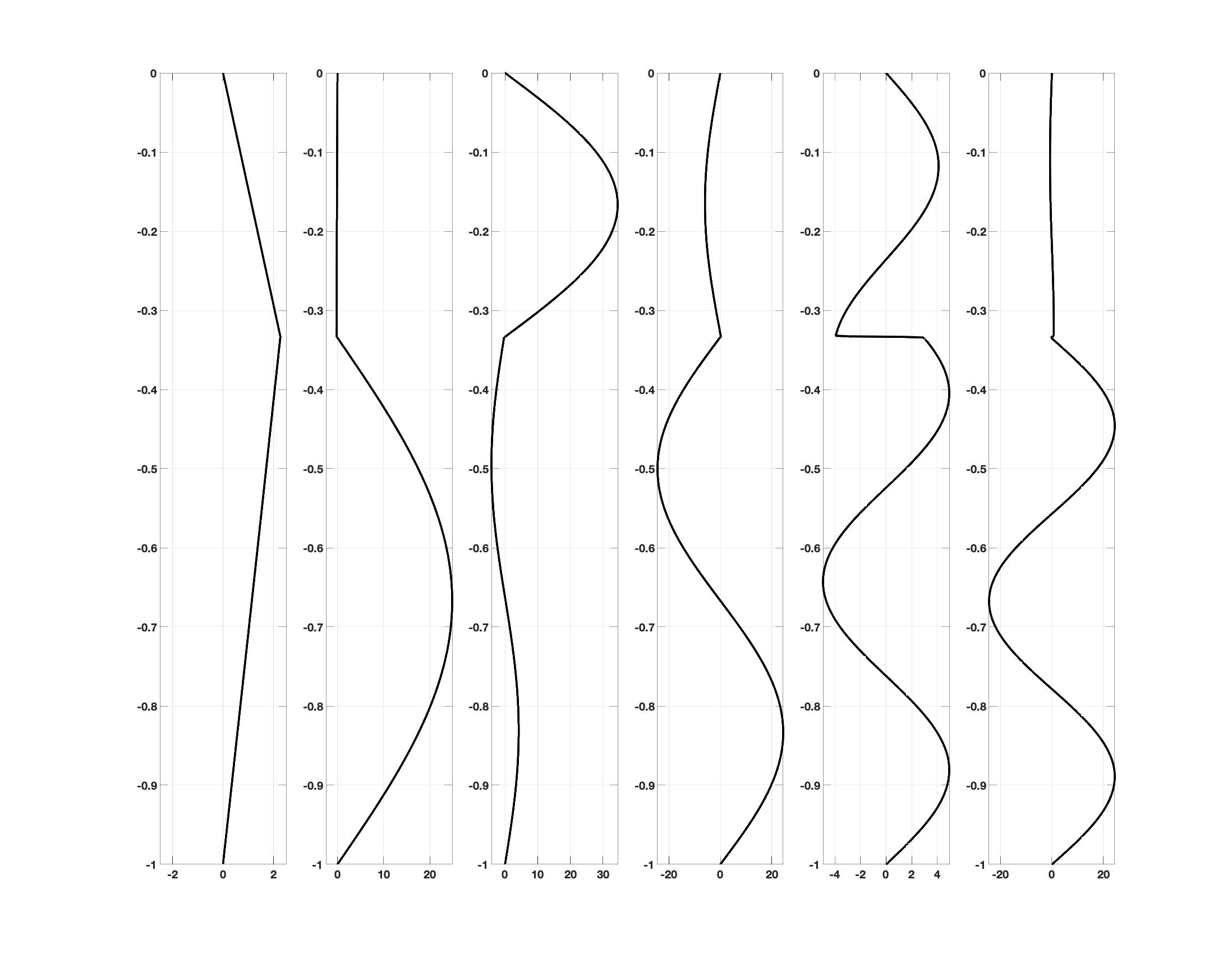}
\caption{The first modes associated to the stratification \eqref{densitysharp} with $\delta=0.0005$ and $z_0=-1/3$}
\label{fig5}
\end{figure}

\appendix

%\section{Proof of Lemma \ref{lemmasecular}}
%
%\subsection{The one-dimensional case}
%
%When $d=1$, (\ref{systsecular}) is equivalent to
%$$
%\left\lbrace
%\begin{array}{l}
%\dsp (\dt + c\dx)(\rho+cV)=f+cG,\\
%\dsp (\dt-c\dx)(\rho-cV)=f-cG.
%\end{array}\right.
%$$
%\subsection{The two-dimensional case}
%
%When $d=2$, we can decompose the wave system under the form
%$$
%\sum_{j=\pm,0}\big(\dt+i\lambda_j(D)\big)\Pi_j(D)\left(\begin{array}{c} \rho \\ c_n V\end{array}\right)=\left(\begin{array}{c} QQQ\\QQQ\end{array}\right)
%$$
%\appendix

\section{Product and commutator estimates}\label{appcom}

We use here the notations
$$
A=B+\langle C\rangle_{s>s_0}
$$
to mean that $A=B$ if $s\leq s_0$, and $A=B+C$ otherwise.

\medbreak

From the following classical product estimate for functions on $\R^d$,
$$
\forall t_0>d/2,\quad \forall s\geq 0,\qquad \abs{fg}_{H^s}\lesssim
\abs{f}_{H^s}\abs{g}_{H^{t_0}}+\Big\langle \abs{f}_{H^{t_0}}\abs{g}_{{H^s}} \Big\rangle_{s>t_0},
$$
we deduce from $L^\infty\times L^2$ product estimates on the strip
$\cS$ and the continuous embedding $H^{s+1/2,1}\subset L^\infty
H^{s}$ ($s\in \R$) that functions on the strip $\cS$ satisfy the
following product estimate, for all $t_0>d/2$, and $s\geq 0$,
\begin{equation}\label{estprod}
\Abs{FG}_{H^{s,0}}\lesssim
\Abs{F}_{H^{s,0}}\Abs{G}_{H^{t_0+1/2,1}}+\Big\langle \Abs{F}_{H^{t_0+1/2,1}}\Abs{G}_{{H^{s,0}}} \Big\rangle_{s>t_0},
\end{equation}
It is possible to deduce another product estimate from the product
estimate on $\R^d$; for instance
\begin{equation}\label{estprodb}
\Abs{FG}_{H^{s,0}}\lesssim
\Abs{F}_{H^{s,0}}\Abs{G}_{H^{t_0+1/2,1}}+\Big\langle \Abs{F}_{H^{t_0,0}}\Abs{G}_{{H^{s+1/2,1}}} \Big\rangle_{s>t_0};
\end{equation}
in particular, if $r_0=t_0$ if $s\leq t_0$ and $r_0=s$ otherwise, then
\begin{equation}\label{estprodbis}
\Abs{FG}_{H^{s,0}}\lesssim
\Abs{F}_{H^{r_0,0}}\Abs{G}_{H^{r_0+1/2,1}}.
\end{equation}
We also need in this paper product estimates in $H^{s,k}$.
Remarking
that
\begin{align*}
\Abs{FG}_{H^{s,k}}\lesssim \sum_{k'+k''=k} \Abs{(\dz^{k'}
  F)(\dz^{k''}G)}_{H^{s-k,0}},
\end{align*}
we easily deduce from \eqref{estprodb} that
\begin{equation}\label{prodalg}
\forall t_0>d/2,\quad \forall s\geq 2t_0+1,\quad\forall 0\leq k\leq
s,\qquad
\Abs{FG}_{H^{s,k}}\lesssim \Abs{F}_{H^{s,k}}\Abs{G}_{H^{s,k}}.
\end{equation}

\medbreak

For commutators with horizontal derivatives, we first recall the following estimate that combines the
Kato-Ponce estimates (for large $s$) and Coifmann-Meyer estimates (for
small $s$), for functions defined over $\R^d$,
$$
\forall t_0>d/2,\quad \forall s\geq 0,\qquad \babs{[\Lambda^s,f]g}_{2}\lesssim
\abs{f}_{H^{t_0+1}}\abs{g}_{H^{s-1}}+\Big\langle \abs{f}_{H^s}\abs{g}_{H^{t_0}}\Big\rangle_{s>t_0+1}
$$
(see for instance Theorems 3 and 6 in \cite{Lannes_JFA}). 
Using the continuous embedding $H^{s+1/2,1}\subset L^\infty
H^{s}$ ($s\in \R$), one readily deduces the following estimate for
functions defined in the strip $\cS$, and for all $t_0>d/2$, and $s\geq 0$
\begin{equation}\label{estcom}
\bAbs{[\Lambda^s,F]G}_{2}\lesssim
\Abs{F}_{H^{t_0+3/2,1}}\Abs{G}_{H^{s-1,0}}+\Big\langle \Abs{F}_{H^{s,0}}\Abs{G}_{H^{t_0+1/2,1}}\Big\rangle_{s>t_0+1}
\end{equation}
or also
\begin{equation}\label{estcombis}
\bAbs{[\Lambda^s,F]G}_{2}\lesssim
\Abs{F}_{H^{t_0+3/2,1}}\Abs{G}_{H^{s-1,0}}+\Big\langle \Abs{F}_{H^{s+1/2,1}}\Abs{G}_{H^{t_0,0}}\Big\rangle_{s>t_0+1}
\end{equation}
(see for instance \S B.2.2. in \cite{Lannes_book}).

Let us finally consider commutators with horizontal {\it and} vertical
derivatives, namely, with $\Lambda^{s-k}\dz^k$ ($k=1,2$), we first
remark that
$$
[\Lambda^{s-k}\dz^k,F]G=\Lambda^{s-k}
[\dz^k,F]G+[\Lambda^{s-k},F]\dz^k G.
$$
Using the product estimates \eqref{estprod}-\eqref{estprodb} for the
first term of the right-hand-side, and the commutator estimates
\eqref{estcom}-\eqref{estcombis} for the second one, we get
\begin{equation}\label{estcomf}
\forall n\geq 2t_0+2,\quad\forall 0\leq k\leq
n,\qquad  \bAbs{[\Lambda^{n-k}\dz^k,F]G}_2\lesssim \Abs{F}_{H^{n-1}}\Abs{G}_{H^{n-1}}.
\end{equation}

\begin{acknowledgements}
D. L. wants to thank J.-F. Bony, N. Popoff and B. Young for fruitful discussions about this work.
\end{acknowledgements}

% Authors must disclose all relationships or interests that 
% could have direct or potential influence or impart bias on 
% the work: 
%
\section*{Conflict of interest}
 The authors declare that they have no conflict of interest.

% BibTeX users please use one of
%\bibliographystyle{spbasic}      % basic style, author-year citations
%\bibliographystyle{spmpsci}      % mathematics and physical sciences
%\bibliographystyle{spphys}       % APS-like style for physics
%\bibliography{}   % name your BibTeX data base

% Non-BibTeX users please use

\end{document}